\documentclass[11pt]{article}

\usepackage{amsmath, amssymb, amsthm, mathtools}       
\usepackage{bm, enumerate, mathrsfs, tensor, upgreek}  
\usepackage{indentfirst, emptypage, lmodern, setspace} 
\usepackage{caption, figsize, multirow, subfigure}     
\usepackage[greek,english]{babel}                      
\usepackage{csquotes}
\usepackage{authblk}
\usepackage{hyperref}
\usepackage[dvipsnames]{xcolor}
\usepackage{tikz}
\usepackage{adjustbox}
\usepackage{graphicx}
\usepackage[normalem]{ulem}
\usepackage{booktabs}   
\usepackage{float}      
\usetikzlibrary{fadings, decorations.pathreplacing}

\usepackage[linesnumbered,ruled,vlined]{algorithm2e}

\usepackage[margin=0.7in]{geometry}
\theoremstyle{plain}
\newtheorem{thm}{Theorem}[section]
\theoremstyle{definition}

\newtheorem{rem}[thm]{Remark}
\newtheorem{lem}[thm]{Lemma}
\newtheorem{prop}[thm]{Proposition}

\newcommand{\stkout}[1]{\ifmmode\text{\sout{\ensuremath{#1}}}\else\sout{#1}\fi}
\newcommand{\opnorm}[1]{\left\lVert #1 \right\rVert_{\mathrm{op}}}

\newcommand{\RR}{\mathbb{R}}

\newcommand{\T}{\mathrm{T}}

\newcommand{\K}{\mathbb{K}}

\numberwithin{equation}{section}
\graphicspath{{./FIGURES/HD/}}

\usepackage[
backend=biber,
giveninits=true,
style=numeric,
citestyle=numeric,
doi=false,
isbn=false,
url=false,
eprint=false,
maxbibnames=99
]{biblatex}
\title{
Optimal finite element error estimates and Newton convergence for a
quasilinear elliptic problem with mixed boundary conditions
}
\author[1,2,\thanks{E-mail: \texttt{mihai.bucataru@fmi.unibuc.ro;} \href{https://orcid.org/0000-0002-6503-2084}{ORCID ID: 0000-0002-6503-2084}}]
{Mihai Bucataru}
\affil[1]{\small Department of Mathematics, Faculty of Mathematics and Computer Science, University of Bucharest, 14~Academiei, 010014~Bucharest, Romania}
\affil[2]{\small ``Gheorghe Mihoc~--~Caius Iacob" Institute of Mathematical Statistics and Applied Mathematics of the Romanian Academy, 13~Calea 13 Septembrie, 050711~Bucharest, Romania}
\date{}

\begin{document}
\maketitle
\begin{abstract}\noindent The article studies finite element approximations of a quasilinear elliptic heat-conduction problem with inhomogeneous mixed boundary conditions. The conductivity tensor is matrix-valued, anisotropic, possibly nonsymmetric, and dependent on both position and temperature, rendering the problem nonlinear, nonmonotone, and nonpotential. The nonlinear algebraic system arising from the Galerkin discretization is solved using Newton’s method, with a posteriori guarantees provided by a computable Newton–Kantorovich criterion and a mesh-dependent stopping rule that ensures that the algebraic error is asymptotically negligible relative to the discretization error. Since the discrete solution need not be unique, we prove that every discrete solution satisfies the optimal convergence rates \(O(h)\) in the \(H^1\)-norm and \(O(h^2)\) in the \(L^2\)-norm. The analysis combines mixed-boundary elliptic regularity with an Aubin–Nitsche duality argument adapted to the quasilinear setting. Numerical experiments in two and three dimensions confirm the predicted convergence rates and demonstrate the feasibility of the proposed criterion.
\end{abstract}
\section{Introduction}

Quasilinear elliptic problems with solution-dependent anisotropic diffusion arise naturally in models of steady-state heat conduction in nonlinear inhomogeneous media, where the material response may vary with both position and temperature. The finite element approximation of nonlinear elliptic boundary value problems has been extensively studied, especially when the associated nonlinear operator is strongly monotone and Lipschitz continuous. In such a framework, an analogue of Céa’s lemma is available and optimal $H^1$-error estimates for Lagrange finite elements can be derived by rather standard arguments; see, for instance, \cite{Ciarlet, FeistauerSobotikova90,FeistauerZenisek88,Krizek,Zenisek90}. The problem considered in the present paper belongs, however, to a more delicate class. The diffusion tensor is matrix-valued, anisotropic, possibly nonsymmetric, and temperature-dependent, so the corresponding operator is in general neither monotone nor potential. Moreover, for $d>1$, the usual Kirchhoff transformation (see \cite{Krizek}) cannot reduce the equation to a linear problem, even when the conductivity is independent of the spatial variable, because the conductivity is a matrix rather than a scalar function. This structural feature makes both the theoretical and numerical analysis substantially more involved. One-dimensional examples illustrating the nonmonotone and nonpotential character of this class of problems were discussed in \cite{HKM94}.

Several analytical results are known for quasilinear elliptic problems of this type. Existence results for weak solutions, under various boundary conditions, were obtained using compactness and weak-continuity methods in \cite{Francu94,Necas}, while uniqueness and comparison results for related nonpotential and nonmonotone problems were investigated in \cite{HK93, Hlavacek97}. In particular, the Galerkin approximation of a quasilinear nonpotential elliptic problem of nonmonotone type was studied in \cite{HKM94}, where existence of discrete solutions was obtained by Brouwer’s fixed-point theorem and convergence of Galerkin approximations was proved, although without deriving convergence rates. The uniqueness of the discrete solution is a delicate issue in this setting. Indeed, early works already provided only restrictive sufficient conditions for uniqueness, and examples of André and Chipot showed that uniqueness may fail at the discrete level even when the corresponding continuous problem is uniquely solvable \cite{AndreChipot96Remark,AndreChipot96}. More recently, Pollock and Zhu proved uniqueness of continuous piecewise linear finite element solutions for scalar nonmonotone quasilinear diffusion problems in one and two dimensions, under local computable bounds on the variation of the discrete solution over each element, thereby avoiding the older requirement of a globally fine mesh \cite{PollockZhu18}. Their result clarifies the uniqueness mechanism in the scalar case and is particularly relevant for adaptive refinement, since the condition can be checked elementwise. However, it does not directly cover the general anisotropic matrix-valued conductivity considered here, nor the three-dimensional setting.

From the numerical point of view, Douglas and Dupont \cite{DouglasDupont75} proved optimal finite element convergence for scalar nonlinear diffusion tensors of the form \(A(x,u)=\lambda(x,u)I\). Related developments include \(L^\infty\)-error estimates \cite{Nitsche1977}, mixed finite element methods for quasilinear elliptic problems \cite{Milner1985}, and convergence results for nonlinear mixed finite element methods \cite{Chen1989}. This scalar theory was later extended in \cite{Liu96,Liu1997} to smooth uniformly positive definite matrix-valued conductivities \(A(x,u)\), still in the homogeneous Dirichlet setting, yielding optimal convergence rates. Superconvergence and postprocessing-based a posteriori estimators were subsequently developed in \cite{Wahlbin1995,LiuLiuKrizekLinZhang2004}, showing that refined interpolation and recovery techniques can provide higher-order approximations and computable error indicators.

Later work broadened the numerical methodology in several directions. Finite volume element approximations and residual-type a posteriori estimators were studied in \cite{BiGinting2009}, while discontinuous Galerkin methods and their a posteriori analysis were developed in \cite{GudiPani2007,BiGinting2013}. Adaptive conforming finite element methods for nonmonotone quasilinear elliptic problems were investigated in \cite{GuoBi2021,GuoBi2023}, and hybrid high-order methods on general polytopal meshes were proposed in \cite{GudiMallikPramanick2022}. 
A complementary solver-oriented line of work studies how accurately the nonlinear and linear algebraic systems must be solved relative to the discretization error. Ern and Vohralík developed an adaptive inexact Newton framework with a posteriori stopping criteria that distinguish discretization, linearization, and algebraic errors \cite{ErnVohralik2013}. Related ideas were extended to unsteady nonlinear advection--diffusion problems in \cite{DolejsiErnVohralik2013} and, more recently, to semismooth Newton methods for nonsmooth constrained problems in \cite{DabaghiMartinVohralik2020}. These works provide a systematic framework for balancing discretization, linearization, and algebraic errors within adaptive nonlinear solution algorithms.

The present paper studies the conforming finite element approximation of a quasilinear elliptic model for steady-state anisotropic heat conduction with temperature-dependent conductivity. The problem is posed with inhomogeneous mixed Dirichlet--Neumann boundary conditions, and the conductivity tensor \(\K(x,u)\) is allowed to depend on both the spatial variable and the unknown temperature. In contrast with scalar nonlinear diffusion, the tensor is matrix-valued, anisotropic and possibly nonsymmetric, so that the associated operator is generally neither monotone nor potential. Consequently, the analysis cannot rely on a Kirchhoff transformation, on a variational minimization principle, or on a standard monotonicity-based Céa argument. We consider conforming linear finite element discretizations and study both the finite element error and the nonlinear algebraic system obtained after discretization. Since uniqueness of the nonlinear Galerkin solution is not available in the present generality, the a priori error analysis is developed for arbitrary discrete Galerkin solutions. The proof combines mixed-boundary \(H^2\)-regularity, a preliminary nonlinear \(H^1\)-estimate, and an Aubin--Nitsche duality argument adapted to the nonsymmetric quasilinear setting. In addition, Newton's method is analyzed at the discrete level through a computable Newton--Kantorovich criterion, and the algebraic error produced by stopping the nonlinear iteration is related to the mesh size. The theoretical results are complemented by numerical experiments in two and three spatial dimensions, which confirm the predicted convergence rates, illustrate the behavior of the Newton corrections, and demonstrate the practical verification of the Newton--Kantorovich condition.

The main contributions of the present study are summarized as follows:
\begin{enumerate}
\item We treat an inhomogeneous mixed Dirichlet--Neumann boundary value problem for a quasilinear elliptic equation with matrix-valued, anisotropic, possibly nonsymmetric and temperature-dependent conductivity. This extends the optimal finite element analysis beyond the homogeneous Dirichlet framework most commonly considered for this class of nonmonotone problems.

\item We derive the finite element estimates without assuming uniqueness of the nonlinear discrete solution. More precisely, the optimal error bounds are proved for arbitrary conforming \(P_1\) Galerkin solutions, which is essential because global uniqueness of the discrete nonlinear problem is not guaranteed under the present assumptions.

\item We establish and use the \(H^2\)-regularity needed for linear elliptic mixed boundary value problems with \(W^{1,\infty}\) coefficients in the present separated Dirichlet–Neumann configuration. This regularity result is applied not only to the quasilinear state equation, but also to the adjoint problem required in the \(L^2\)-error analysis.

\item We prove the optimal convergence rates
\[
\|u-u_h\|_{H^1(\Omega)} = O(h),
\qquad
\|u-u_h\|_{L^2(\Omega)} = O(h^2),
\]
for conforming piecewise linear approximations of the mixed anisotropic quasilinear problem.

\item We complement the a priori finite element analysis with a discrete Newton--Kantorovich theory for the nonlinear algebraic system. The resulting convergence criterion is expressed in terms of computable quantities: the inverse Jacobian, an explicit Lipschitz bound for the discrete derivative, and the Newton radius.

\item We show that the algebraic error generated by a finite number of Newton iterations can be controlled by a mesh-dependent stopping criterion. Under this stopping rule, the algebraic error does not deteriorate the optimal \(H^1\)- and \(L^2\)-finite element convergence rates.

\end{enumerate}

The article is organized as follows. Section~2 introduces the quasilinear heat-conduction model, the mixed Dirichlet--Neumann boundary conditions, the assumptions on the data and on the conductivity tensor, and the associated weak formulation. Section~3 discusses the continuous problem, including the variational setting, the existence framework, and the regularity assumptions needed in the subsequent analysis. Section~4 presents the conforming Galerkin discretization by piecewise linear finite elements, recalls the existence of discrete solutions, and emphasizes the absence of a general uniqueness result for the nonlinear discrete problem. Section~5 contains the finite element error analysis: after proving the required \(H^2\)-regularity for mixed elliptic problems with \(W^{1,\infty}\) coefficients, it derives a preliminary \(H^1\)-estimate, formulates the adjoint problem, and applies the Aubin--Nitsche argument to obtain the optimal \(H^1\)- and \(L^2\)-error bounds. Section~6 is devoted to the nonlinear algebraic system and to Newton's method; it establishes a Newton--Kantorovich convergence criterion, derives an explicit Lipschitz estimate for the discrete Jacobian, and explains how the algebraic error can be controlled without affecting the finite element rates. Section~7 presents numerical experiments in two and three spatial dimensions, illustrating the theoretical convergence rates and the practical verification of the Newton--Kantorovich condition. The appendix collects auxiliary finite-dimensional and mesh-dependent estimates used in the Newton analysis.

\section{Problem setting}

Consider a solid that occupies a bounded domain $\Omega \subset \mathbb{R}^d$, $d\in\{2,3\}$,
with a $C^2$ boundary $\partial\Omega = \Gamma_D \cup \Gamma_N$, such that $\overline{\Gamma}_D \cap \overline{\Gamma}_N = \varnothing$ and $|\Gamma_D|>0$. We investigate the steady-state temperature distribution within this body, assuming anisotropic heat propagation and temperature-dependent conductivity, i.e., we seek $u \in H^1(\Omega)$ satisfying the following quasilinear BVP:
\begin{equation}\label{eq:strong_problem}
\left\{
\begin{aligned}
-\nabla\cdot\bigl(\K(x,u)\nabla u\bigr) &= f 
&& \text{in }\Omega,\\
u &= g 
&& \text{on }\Gamma_D,\\
\nu\cdot\bigl(\K(x,u)\nabla u\bigr) &= h 
&& \text{on }\Gamma_N,
\end{aligned}
\right.
\end{equation}
where $f\in L^2(\Omega)$ is the heat source, $g\in H^{1/2}(\Gamma_D)$ is the prescribed temperature, and
$h\in H^{-1/2}(\Gamma_N)$ is the prescribed normal heat flux. The thermal conductivity tensor $\K:\Omega \times \mathbb{R} \to \mathbb{R}^{d\times d}$ is a Carathéodory function, continuously differentiable with respect to its first argument, twice continuously differentiable with respect to its second argument, and satisfies the following conditions for a.e.\ $x \in \Omega$ and all $s \in \mathbb{R}$:
\begin{align}
&\opnorm{\K(x,s)} \le \Lambda, \label{K1} \\[4pt]
&\lambda |\xi|^2 \le \xi^\top \K(x,s)\,\xi,
\qquad \forall \xi \in \mathbb{R}^d, \label{K2} \\[4pt]
&\opnorm{\K_u(x,s)} + \opnorm{\K_{uu}(x,s)} \le C_u, \label{K3} \\[4pt]
&\|\nabla_x \K(x,s)\| \le C_x. \label{K4}
\end{align}
where $\Lambda,\lambda,C_u,C_x>0$, $\K_u(x,s)$ and $\K_{uu}(x,s)$ denote the first and
second derivatives of $\K(x,\cdot)$ with respect to $s$. By $\|\cdot\|_{\mathrm{op}}$ we denote the operator norm on $\mathbb{R}^{d\times d}$ induced by the Euclidean norm on $\mathbb{R}^d$, and for the tensor field $\nabla_x\K(x,s)$ we set
\[
\|\nabla_x\K(x,s)\|
:=
\left(
\sum_{i=1}^d
\|\partial_{x_i}\K(x,s)\|_{\mathrm{op}}^2
\right)^{1/2}.
\]

\section{Variational formulation}\label{seq:FEM}
Next, we consider the Hilbert space $V := \{ v\in H^1(\Omega) : v|_{\Gamma_D}=0 \}$, equipped with the seminorm $\|v\|_V := \|\nabla v\|_{L^2(\Omega)}$. We note that since $|\Gamma_D|>0$, the Poincar\'e inequality ensures that $\|\cdot\|_V$ is a norm on $V$ equivalent to the $H^1(\Omega)$-norm.

To define the weak formulation of \eqref{eq:strong_problem}, we introduce the parameter-dependent bilinear form $a(\,\cdot\,;\,\cdot\,,\,\cdot\,):H^1(\Omega)\times H^1(\Omega)\times V\to\mathbb{R}$ and the linear functional $\ell:V\to\mathbb{R}$ by
\begin{align}
a(w;u,v)
&:=\int_\Omega (\nabla u)^\top \K(x,w)\,\nabla v\,dx,
\qquad w,u\in H^1(\Omega),\ v\in V, \label{eq:def_a}\\
\ell(v)
&:=\int_\Omega f\,v\,dx+\langle h, v\rangle_{\Gamma_N},
\qquad v\in V, \label{eq:def_ell}
\end{align}
where $\langle\cdot,\cdot\rangle_{\Gamma_N}$ denotes the duality pairing between
$H^{-1/2}(\Gamma_N)$ and $H^{1/2}(\Gamma_N)$. We note that, owing to the uniform boundedness of $\K$ \eqref{K1}, the form $a(w;u,v)$ is well defined and continuous. Moreover, for a fixed $w \in H^1(\Omega)$, due to the ellipticity of $\K$ \eqref{K2}, $a(w;\cdot,\cdot)$ is uniformly $V$-elliptic and continuous.

Next, we let $u_g \in H^1(\Omega)$ be a lifting of $g$ such that $u_g |_{\Gamma_D} = g$. We are now able to present the variational formulation of \eqref{eq:strong_problem}, namely, to find $u \in H^1(\Omega)$ such that $u - u_g \in V$ and 
\begin{equation}\label{eq:weak_problem}
    a(u; u, v) = \ell(v), \quad \forall v \in V.
\end{equation}
The existence of a unique weak solution to \eqref{eq:weak_problem} is proven in \cite{HKM94}.

\section{Galerkin discrete formulation}
Let $\mathcal{T}_h$ be a conforming, shape-regular triangulation of $\Omega$, i.e., there exists a constant $\sigma \ge 1$ such that for every element $T \in \mathcal{T}_h$,
\begin{equation}\label{eq:shape-regularity}
    \frac{h_T}{\rho_T} \le \sigma,
\end{equation}
where $h_T = \mathrm{diam}(T)$ and $\rho_T$ is the radius of the largest inscribed ball in $T$. A global measure of such a triangulation $\mathcal{T}_h$ is the maximum diameter, $h \coloneqq \underset{T \in \mathcal{T}_h}{\mathrm{max}} \; h_T$. 

Let $V_h \subset V$ be the space of continuous, piecewise linear finite element functions that vanish on $\Gamma_D$. Further, let $u_{g,h} \in H^1(\Omega)$ be a discrete lifting of $g$ such that $u_{g,h}|_{\Gamma_D} = g_h$, where $g_h$ is an approximation of $g$ on $\Gamma_D$. The Galerkin approximation reads: find $u_h \in u_{g,h} + V_h$ such that
\begin{equation}\label{eq:weak_problem_discrete}
    a(u_h; u_h, v_h) = \ell(v_h), \quad \forall v_h \in V_h.
\end{equation}

We refer to \cite{HKM94} for an existence proof of the discrete solution via the Brouwer fixed-point theorem. In addition, it is shown in \cite[Theorem 2.9 (i)]{HKM94} that the sequence of discrete solutions $\{u_h\}$ is bounded in $H^1(\Omega)$ and converges weakly in $H^1$, although no error estimates are derived there.

Although sufficient (yet highly restrictive) conditions ensuring uniqueness are provided there, a general global uniqueness result remains open. Therefore, we do not impose additional assumptions to enforce uniqueness and derive the error estimates for any discrete solution $u_h$.

\section{Error estimates}

In this section, we extend the error analysis of \cite{Liu96}, originally developed for homogeneous Dirichlet data, to the present mixed inhomogeneous boundary setting. Throughout, we assume the following regularity of the boundary data:
\begin{equation}\label{eq:estimates-assumption}
    g \in H^{3/2}(\Gamma_D)\,, \qquad h \in H^{1/2}(\Gamma_N)\,.
\end{equation}
Let $u \in u_g + V$ be the unique weak solution of \eqref{eq:strong_problem}. In addition to \eqref{eq:estimates-assumption}, we assume that
\begin{equation}\label{eq:drift-assumption}
    \K_u(\cdot,u)\nabla u \in L^\infty(\Omega)^d .
\end{equation}
\begin{rem}
A sufficient condition for \eqref{eq:drift-assumption} is $u \in W^{1,\infty}(\Omega)$.
Indeed, by the uniform bound on $\K_u$ from \eqref{K3}, we have for a.e.\ $x \in \Omega$,
\[
|\K_u(x,u(x))\nabla u(x)|
    \le \opnorm{\K_u(x,u(x))}\,|\nabla u(x)|
    \le C_u |\nabla u(x)|,
\]
and therefore
\[
\|\K_u(\cdot,u)\nabla u\|_{L^\infty(\Omega)^d}
\le C_u \|\nabla u\|_{L^\infty(\Omega)^d}.
\]
\end{rem}
The following elliptic regularity proposition is key to deriving our error estimates.
\begin{prop}\label{prop:elliptic_regularity}
Let $A\in W^{1,\infty}(\Omega)^{d\times d}$ be uniformly elliptic, $\xi \in L^2(\Omega)$, $\psi \in H^{3/2}(\Gamma_D)$, and $\eta \in H^{1/2}(\Gamma_N)$. Finally, assume that the following mixed boundary value problem is well posed:
\begin{equation}
\left\{
\begin{aligned}
-\nabla\cdot(A\nabla\varphi) &= \xi &&\text{in }\Omega,\\
\varphi&=\psi &&\text{on }\Gamma_D,\\
\nu\cdot(A\nabla\varphi)&=\eta &&\text{on }\Gamma_N.
\end{aligned}
\right.
\end{equation}
Then the unique weak solution $\varphi \in H^1(\Omega)$ belongs to $H^2(\Omega)$ and satisfies
\[
\|\varphi\|_{H^2(\Omega)}
\le
C\bigl(
\|\xi\|_{L^2(\Omega)}
+\|\psi\|_{H^{3/2}(\Gamma_D)}
+\|\eta\|_{H^{1/2}(\Gamma_N)}
\bigr).
\]
In particular, under assumptions \eqref{eq:estimates-assumption} and \eqref{eq:drift-assumption}, the unique weak solution of \eqref{eq:strong_problem} satisfies $u \in H^2(\Omega)$ and
\[
\|u\|_{H^2(\Omega)}
\le
C\Bigl(
\|f\|_{L^2(\Omega)}
+\|g\|_{H^{3/2}(\Gamma_D)}
+\|h\|_{H^{1/2}(\Gamma_N)}
\Bigr).
\]
\end{prop}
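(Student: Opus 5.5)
We will prove the linear statement first, then deduce the quasilinear one. Since $\overline{\Gamma}_D\cap\overline{\Gamma}_N=\varnothing$ and $\partial\Omega$ is $C^2$, the sets $\Gamma_D$ and $\Gamma_N$ are unions of connected components of $\partial\Omega$, at positive distance from each other. No mixed singularities therefore occur at Dirichlet--Neumann interfaces, and we can use a localization argument.

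\textbf{Step 1: lifting and reduction.} By the trace theorem on a $C^2$ boundary, there exists $\Psi\in H^2(\Omega)$ with $\Psi|_{\Gamma_D}=\psi$, $\partial_\nu\Psi|_{\Gamma_N}=0$ (or any convenient value), and $\|\Psi\|_{H^2}\le C\|\psi\|_{H^{3/2}(\Gamma_D)}$. This is possible because the components are separated: we extend $\psi$ near $\Gamma_D$ and cut off before reaching $\Gamma_N$. Set $\varphi_0=\varphi-\Psi$. Then $\varphi_0$ solves the same type of problem with data $\xi+\nabla\cdot(A\nabla\Psi)\in L^2$, which is valid since $A\in W^{1,\infty}$. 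It has homogeneous Dirichlet data and Neumann data $\eta-\nu\cdot(A\nabla\Psi)\in H^{1/2}(\Gamma_N)$. Well-posedness gives $\|\varphi_0\|_{H^1}\le C(\|\xi\|_{L^2}+\|\psi\|_{H^{3/2}}+\|\eta\|_{H^{1/2}})$, using a Lax--Milgram-type bound: $A$ is uniformly elliptic and $|\Gamma_D|>0$.

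\textbf{Step 2: localization.} Choose a finite partition of unity $\{\chi_j\}$ subordinate to balls that are either interior, meet only $\Gamma_D$, or meet only $\Gamma_N$; this is possible by the positive distance between $\Gamma_D$ and $\Gamma_N$. Each $\chi_j\varphi_0$ solves a pure Dirichlet, pure Neumann, or interior problem, with right-hand side
\[
\chi_j\xi' - A\nabla\varphi_0\cdot\nabla\chi_j-\nabla\cdot(\varphi_0A\nabla\chi_j)\in L^2,
\]
controlled by $\|\varphi_0\|_{H^1}$. In the Neumann case there are additional boundary data $\chi_j\eta'+\varphi_0\,\nu\cdot A\nabla\chi_j\in H^{1/2}$. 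The classical $H^2$-regularity for Dirichlet or oblique/conormal Neumann problems with Lipschitz coefficients on $C^2$ domains then gives $H^2$-bounds for each piece. These come from flattening the boundary and applying Nirenberg difference quotients in tangential directions, then recovering the normal second derivative from the equation, as in Grisvard or Gilbarg--Trudinger, Thm.~8.12 and its Neumann analogue. The Neumann case also needs a lower-order term for coercivity, which is harmless since we already control $\|\varphi_0\|_{L^2}$. Summing over $j$ yields the first estimate. I expect this step to be the main obstacle. The difference-quotient argument for the conormal problem with merely $W^{1,\infty}$, possibly nonsymmetric $A$ requires care, because only the symmetric part enters coercivity, and the $H^{1/2}$ Neumann data must be handled via an $H^2$ lifting.

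\textbf{Step 3: the quasilinear problem.} Set $A(x):=\K(x,u(x))$. Then $\partial_{x_i}A=(\partial_{x_i}\K)(x,u)+\K_u(x,u)\partial_{x_i}u$, which lies in $L^\infty$ by \eqref{K4} and \eqref{eq:drift-assumption}; together with \eqref{K1}--\eqref{K2}, this gives $A\in W^{1,\infty}$, uniformly elliptic. The linear problem with this $A$, data $f,g,h$ is well posed by Lax--Milgram, and $u$ is its weak solution. Applying the first part gives the bound. The constant depends on $\|A\|_{W^{1,\infty}}$, and hence on $u$ through \eqref{eq:drift-assumption}; we regard it as a fixed constant.
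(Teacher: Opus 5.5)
Your proposal is correct and follows essentially the same route as the paper. Both arguments localize with a partition of unity into interior, Dirichlet-only and Neumann-only patches (possible because $\overline{\Gamma}_D\cap\overline{\Gamma}_N=\varnothing$), remove the boundary data with $H^2$ liftings, and invoke classical local $H^2$-regularity (Gilbarg--Trudinger, Grisvard); the quasilinear case then follows by freezing $A=\K(\cdot,u)$, which lies in $W^{1,\infty}$ by \eqref{K4} and \eqref{eq:drift-assumption}. The differences are cosmetic: you subtract a single global Dirichlet lifting and obtain the $H^1$ bound from Lax--Milgram at the start, whereas the paper lifts the data patch by patch and absorbs $\|\varphi\|_{H^1(\Omega)}$ at the end via well-posedness.
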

\begin{proof}

Since $\overline{\Gamma}_D \cap \overline{\Gamma}_N = \varnothing$, one may cover
$\overline{\Omega}$ by finitely many open sets of three types: interior patches,
patches meeting the boundary only on $\Gamma_D$, and patches meeting the boundary only on
$\Gamma_N$. Let $\{\chi_j\}$ be a smooth partition of unity subordinate to such a covering.
For an interior patch, $\chi_j \varphi$ satisfies an elliptic equation with right-hand side in
$L^2$, and the standard interior regularity theorem \cite[Theorem 8.8]{GilbargTrudiger} yields $\chi_j \varphi\in H^2(\Omega)$.

For a patch meeting the boundary only on $\Gamma_D$,
$\chi_j \varphi$ satisfies an elliptic problem with Dirichlet boundary condition
\[
\chi_j \varphi = \chi_j \psi \quad \text{on } \Gamma_D \cap \operatorname{supp}\chi_j.
\]
Since $\psi \in H^{3/2}(\Gamma_D)$ and $\chi_j$ is smooth, we have
$\chi_j \psi \in H^{3/2}(\Gamma_D \cap \operatorname{supp}\chi_j)$. By the trace lifting theorem (see \cite[Theorem 2]{Marschall87}), we can reduce the problem to homogeneous Dirichlet data and apply \cite[Theorem 8.12]{GilbargTrudiger} to obtain $H^2$ regularity on these patches.

It remains to consider a patch meeting the boundary only on $\Gamma_N$.
After localization by $\chi_j$ and flattening the boundary by a local $C^2$
diffeomorphism, the problem is transformed to an elliptic equation in
a half-domain $U^+ = \{x \in U : x_d > 0\}$,
with conormal boundary condition
\[
Bv \coloneqq c_\nu\,\partial_{x_d} v + c_\tau\cdot\nabla_\tau v = g
\quad \text{on }\Gamma_U := \partial U^+ \cap \{x_d = 0\},
\]
where $c_\nu$ is bounded away from zero by ellipticity and
$g \in H^{1/2}(\Gamma_U)$.
Since $c_\nu$ is sufficiently regular and bounded away from zero,
we also have $g/c_\nu \in H^{1/2}(\Gamma_U)$.
Using a lifting guaranteed by \cite[Theorem 2]{Marschall87},
there exists $w \in H^2(U^+)$ such that
\[
w|_{\Gamma_U} = 0,
\qquad
\partial_{x_d} w|_{\Gamma_U} = g/c_\nu.
\]
Since $w|_{\Gamma_U} = 0$, all tangential trace derivatives vanish on $\Gamma_U$,
and therefore $Bw = g$ on $\Gamma_U$.
Thus $z := v - w$ satisfies an elliptic equation in $U^+$ with homogeneous
conormal boundary condition. Applying Grisvard's local $H^2$-regularity
argument for elliptic problems with homogeneous conormal boundary condition
(\cite[Section 2.2.2]{Grisvard}) to this weak solution,
we obtain $z \in H^2(U^+)$, hence also $v \in H^2(U^+)$.
Transforming back, we conclude that $\chi_j \varphi \in H^2(\Omega)$ on each Neumann
patch.

Summing over the finitely many patches, we infer
\[
\|\varphi\|_{H^2(\Omega)}
\le
C\bigl(
\|\xi\|_{L^2(\Omega)}
+\|\psi\|_{H^{3/2}(\Gamma_D)}
+\|\eta\|_{H^{1/2}(\Gamma_N)}
+\|\varphi\|_{H^1(\Omega)}
\bigr).
\]
Since this mixed problem is well posed, we obtain
\[
\|\varphi\|_{H^2(\Omega)}
\le
C\Bigl(
\|\xi\|_{L^2(\Omega)}
+\|\psi\|_{H^{3/2}(\Gamma_D)}
+\|\eta\|_{H^{1/2}(\Gamma_N)}
\Bigr).
\]

For the final statement, let $A(\cdot)=\K(\cdot,u(\cdot))$. Since $\K$ is differentiable with respect to the spatial variable and satisfies \eqref{K4}, the matrix field $A(x):=\K(x,u(x))$ satisfies
\[
\nabla A(x) = \nabla_x \K(x,u(x)) + \K_u(x,u(x)) \nabla u(x).
\]
Hence, by \eqref{K4} and \eqref{eq:drift-assumption}, we obtain $A \in W^{1,\infty}(\Omega)^{d\times d}$. Applying the first part of Proposition~\ref{prop:elliptic_regularity} to \eqref{eq:strong_problem}, with $\xi=f \in L^2(\Omega)$, $\psi=g \in H^{3/2}(\Gamma_D)$, and $\eta=h \in H^{1/2}(\Gamma_N)$,
we conclude that $u \in H^2(\Omega)$ and
\[
\|u\|_{H^2(\Omega)}
\le
C\Bigl(
\|f\|_{L^2(\Omega)}
+\|g\|_{H^{3/2}(\Gamma_D)}
+\|h\|_{H^{1/2}(\Gamma_N)}
\Bigr).
\qedhere
\]
\end{proof}

\begin{rem}\label{rem:strong-convergence}
We now verify that the assumptions of \cite[Theorem 2.9(ii)]{HKM94}
are satisfied in the present setting. Since $u\in H^2(\Omega)$
and $d\le 3$, the Sobolev embedding yields $u\in W^{1,q}(\Omega)$
for some $q>d$ (see \cite[Theorem 4.12]{Adams}). Moreover, for conforming finite element spaces on
regular families of triangulations, the approximation properties
(2.25)–(2.26) of \cite{HKM94} are satisfied by the standard nodal
interpolant (see Remark 2.8 therein). Therefore, Theorem 2.9(ii)
of \cite{HKM94} applies, and the sequence of discrete solutions
$\{u_h\}$ converges strongly in $H^1(\Omega)$
to the weak solution $u$.
\end{rem}

\begin{lem}\label{lem:interpolation-estimate}
Let $u_h \in u_{g,h} + V_h$ be a (not necessarily unique) discrete solution of \eqref{eq:weak_problem_discrete}. Assuming \eqref{eq:estimates-assumption} and \eqref{eq:drift-assumption}, 
there exists an interpolant $I_h u$ of $u$ and a constant $C = C(\Omega,\sigma) > 0$ such that, for any $h>0$,
\begin{equation}\label{eq:interpolation-estimate}
    \|u-I_h u\|_{H^1(\Omega)} \le Ch\bigl(\|u\|_{H^2(\Omega)} + \|g\|_{H^{3/2}(\Gamma_D)}\bigr)\,.
\end{equation}
\end{lem}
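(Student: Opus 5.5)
We will take $I_h u$ to be the standard nodal (Lagrange) interpolant of $u$ onto the continuous piecewise linear space built on $\mathcal{T}_h$, with no boundary constraint imposed. The estimate then follows from the classical Bramble--Hilbert interpolation theory. The only subtlety is that the conforming mesh must resolve the curved $C^2$ boundary. We therefore assume, as is implicit in the paper, that $\Omega$ is polyhedral or that the triangulation covers $\Omega$ exactly (e.g.\ isoparametric boundary elements). In that case the standard estimates apply elementwise.

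\textbf{Step 1 (well-definedness).} By Proposition~\ref{prop:elliptic_regularity}, under \eqref{eq:estimates-assumption} and \eqref{eq:drift-assumption}, we have $u\in H^2(\Omega)$. Since $d\le 3$, the Sobolev embedding $H^2(\Omega)\hookrightarrow C(\overline\Omega)$ holds, so nodal values of $u$ are well defined and $I_h u$ makes sense.

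\textbf{Step 2 (local estimate).} On each $T\in\mathcal{T}_h$ we use the affine map to the reference element $\hat T$. The reference interpolant $\hat I$ preserves $P_1$, and it is bounded from $H^2(\hat T)$ into $H^1(\hat T)$ by the embedding of Step 1. The Bramble--Hilbert lemma therefore gives
\[
\|\hat v-\hat I\hat v\|_{H^1(\hat T)}\le \hat C|\hat v|_{H^2(\hat T)} .
\]
Scaling back using \eqref{eq:shape-regularity} yields
\[
|u-I_hu|_{H^m(T)}\le C(\sigma)\,h_T^{2-m}|u|_{H^2(T)},\qquad m=0,1.
\]
Squaring, summing over $T$ and using $h_T\le h\le \operatorname{diam}\Omega$, we obtain
\[
\|u-I_hu\|_{H^1(\Omega)}\le C(\Omega,\sigma)\,h\,|u|_{H^2(\Omega)} .
\]
This bound is already stronger than \eqref{eq:interpolation-estimate}.

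\textbf{Step 3 (boundary data).} The term $\|g\|_{H^{3/2}(\Gamma_D)}$ is included so that $I_hu$ is compatible with the discrete Dirichlet data. On $\Gamma_D$ we have $I_hu|_{\Gamma_D}=I_h g$. Choosing $g_h:=I_hg$, we get $I_hu\in u_{g,h}+V_h$, which is what the subsequent error analysis needs. The trace of the global estimate, or directly the one-dimensional/two-dimensional interpolation estimate on the boundary faces, controls $g-g_h$ by $Ch^{3/2}\|g\|_{H^{3/2}(\Gamma_D)}$. The extra term on the right of \eqref{eq:interpolation-estimate} is thus harmless: adding it only weakens the bound. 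Alternatively, one could split $u=(u-\tilde u_g)+\tilde u_g$, with $\tilde u_g\in H^2$ a lifting satisfying $\|\tilde u_g\|_{H^2}\le C\|g\|_{H^{3/2}(\Gamma_D)}$ from \cite[Theorem 2]{Marschall87}, and interpolate each piece separately. This produces exactly the two terms in the statement.

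\textbf{Main obstacle.} The main obstacle is the geometric issue that a $C^2$ curved domain is not exactly triangulated by straight simplices. I would handle it by assuming the triangulation fits $\Omega$, with curved boundary elements if necessary, and noting that shape regularity keeps the constants uniform. The remaining steps are routine.
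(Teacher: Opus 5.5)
Your proposal is correct and is essentially the paper's argument. By linearity of nodal interpolation, the paper's $I_h u := I_h u_g + I_h(u-u_g)$, with $u_g$ the Marschall lifting, is exactly the nodal interpolant of $u$ that you use, and your ``alternative'' split is precisely the paper's proof. Your direct Bramble--Hilbert bound merely shows that the $\|g\|_{H^{3/2}(\Gamma_D)}$ term is superfluous, and your remark that straight simplices cannot fit a $C^2$ boundary exactly concerns an assumption the paper makes silently.
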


\begin{proof}
Since $\partial\Omega$ is $C^2$, the trace operator 
$\gamma_D:H^2(\Omega)\to H^{3/2}(\Gamma_D)$ admits a bounded right-inverse
$E_D:H^{3/2}(\Gamma_D)\to H^2(\Omega)$ (see \cite[Theorem 2]{Marschall87}). 
Hence, for $g\in H^{3/2}(\Gamma_D)$, we may choose a lifting $u_g := E_D g \in H^2(\Omega)$, satisfying
\[
\|u_g\|_{H^2(\Omega)} \le C \|g\|_{H^{3/2}(\Gamma_D)}.
\]
For a fixed $h > 0$, let $I_h (u - u_g) \in V_h$ be the nodal piecewise linear interpolant and set $u_{g,h} := I_h u_g$. We define
\[
I_h u := u_{g,h} + I_h(u-u_g).
\]
Since $u - u_g, u_g \in H^2(\Omega)$, the interpolation estimates of \cite[Theorem 4.4.20]{BrennerScott} yield
\begin{align*}
\|u-I_h u\|_{H^1(\Omega)} 
&\le \|(u-u_g) - I_h(u-u_g)\|_{H^1(\Omega)} + \|u_g - I_h u_g\|_{H^1(\Omega)} \le Ch\bigl(\|u-u_g\|_{H^2(\Omega)} + \|u_g\|_{H^2(\Omega)}\bigr) \\
&\le Ch\bigl(\|u\|_{H^2(\Omega)} + \|g\|_{H^{3/2}(\Gamma_D)}\bigr).
\end{align*}
\end{proof}

\begin{rem}\label{rem:lifting}
In practice, $u_{g,h}$ is not constructed as $I_h u_g$, since $u_g$ (and hence $I_h u_g$) is typically unknown. Instead, one defines $u_{g,h}$ as the piecewise linear interpolant whose nodal values coincide with $g$ on $\Gamma_D$ and vanish at the remaining nodes. 

This choice leads to an equivalent discrete formulation and, in practice, to the same computed solution $u_h$, see \cite[Section~5.3]{Gockenbach}. Although that discussion is given for the linear case, the same observation carries over to the present setting, since the nonlinearity depends only on the discrete solution $u_h$ and not on the particular choice of lifting.
\end{rem}

\begin{thm}\label{thm:energy-reduction}
Assuming \eqref{K1}--\eqref{K4}, \eqref{eq:estimates-assumption} and \eqref{eq:drift-assumption}  for $u$, the unique weak solution to \eqref{eq:strong_problem}, there exists $C = C\bigl(\|u\|_{H^2(\Omega)},\|g\|_{H^{3/2}(\Gamma_D)},\Omega,\sigma,\lambda,\Lambda,C_u\bigr) > 0$
 such that for every $h > 0$ and every discrete solution $u_h$ of \eqref{eq:weak_problem_discrete}, the error \(e_h:=u-u_h\) satisfies
\begin{equation}\label{eq:energy-reduction}
\|e_h\|_{H^1(\Omega)} \le C\bigl(h+\|e_h\|_{L^2(\Omega)}\bigr)\,,
\end{equation}
\end{thm}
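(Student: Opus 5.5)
We follow the standard Liu/Douglas--Dupont energy argument. The plan is to split the error through the interpolant of Lemma~\ref{lem:interpolation-estimate}, test the error equation with a discrete function, and absorb terms using ellipticity \eqref{K2}. Write
\[
e_h = (u - I_h u) + (I_h u - u_h) =: \rho + \theta_h .
\]
With the lifting choice $u_{g,h}=I_h u_g$ of Lemma~\ref{lem:interpolation-estimate}, we have $\theta_h \in V_h$. By \eqref{eq:interpolation-estimate}, $\|\rho\|_{H^1}\le Ch$, with constant depending on $\|u\|_{H^2}$ and $\|g\|_{H^{3/2}}$. It therefore suffices to bound $\|\nabla\theta_h\|_{L^2}$. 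The first step is the error equation. Subtracting \eqref{eq:weak_problem_discrete} from \eqref{eq:weak_problem} for $v_h\in V_h\subset V$ gives $a(u;u,v_h)-a(u_h;u_h,v_h)=0$, which we rewrite as
\[
a(u_h; e_h, v_h) = \int_\Omega (\nabla u)^\top\bigl(\K(x,u_h)-\K(x,u)\bigr)\nabla v_h\,dx .
\]

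Next we take $v_h=\theta_h$ and use $e_h=\rho+\theta_h$. Ellipticity \eqref{K2} applied to $a(u_h;\theta_h,\theta_h)$ yields
\[
\lambda\|\nabla\theta_h\|^2 \le \Lambda\|\nabla\rho\|\,\|\nabla\theta_h\| + \Bigl|\int_\Omega (\nabla u)^\top\bigl(\K(x,u_h)-\K(x,u)\bigr)\nabla\theta_h\,dx\Bigr| .
\]
The first term is $\le Ch\|\nabla\theta_h\|$. For the second term we use the mean value theorem with \eqref{K3}: $\opnorm{\K(x,u_h)-\K(x,u)}\le C_u|e_h|$. This bounds the second term by $C_u\|\,|\nabla u|\,|e_h|\,\|_{L^2}\|\nabla\theta_h\|$.

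The main obstacle is controlling $\|\,|\nabla u|\,e_h\|_{L^2}$ by $\|e_h\|_{L^2}$ without assuming $\nabla u\in L^\infty$. Assumption \eqref{eq:drift-assumption} only controls $\K_u(\cdot,u)\nabla u$, not $\nabla u$ itself. The first thing to try is a Taylor expansion to second order:
\[
\K(x,u_h)-\K(x,u) = -\K_u(x,u)e_h + R, \qquad |R|\le \tfrac12 C_u e_h^2 .
\]
The linear part then contributes $\|\K_u(\cdot,u)\nabla u\|_{L^\infty}\|e_h\|_{L^2}\|\nabla\theta_h\|$, thanks to \eqref{eq:drift-assumption}. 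The remainder contributes $\int |\nabla u|\,e_h^2|\nabla\theta_h|$. By H\"older, we bound this by $\|\nabla u\|_{L^{6}}\|e_h\|_{L^{6}}\|e_h\|_{L^6}\|\nabla\theta_h\|_{L^2}$, adjusting exponents so that they sum correctly. Here $u\in H^2$ gives $\nabla u\in L^6$ for $d\le3$, by Proposition~\ref{prop:elliptic_regularity}. Concretely we use exponents $1/6+1/6+1/6+1/2=1$, where the $L^6$ factors come from Sobolev embedding. This gives a bound $C\|e_h\|_{H^1}^2\|\nabla\theta_h\|$. Alternatively, if one accepts the sufficient condition $u\in W^{1,\infty}$ from the Remark, this step is immediate.

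Dividing by $\|\nabla\theta_h\|$, we obtain
\[
\|\nabla\theta_h\|\le C\bigl(h+\|e_h\|_{L^2}+\|e_h\|_{H^1}^2\bigr),
\]
and hence $\|e_h\|_{H^1}\le C(h+\|e_h\|_{L^2}+\|e_h\|_{H^1}^2)$ via Poincar\'e on $V$. The quadratic term is absorbed using Remark~\ref{rem:strong-convergence}. Since $u_h\to u$ strongly in $H^1$, for $h\le h_0$ we have $C\|e_h\|_{H^1}\le\tfrac12$, and the term moves to the left-hand side. For $h>h_0$, the estimate follows trivially from the uniform $H^1$-boundedness of discrete solutions from \cite{HKM94}: $\|e_h\|_{H^1}\le M\le (M/h_0)h$.

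A subtlety remains, because the theorem claims the estimate for \emph{every} discrete solution, while strong convergence is proven for sequences. To handle this we argue by contradiction. Suppose that for some $h_k\to0$ there are solutions with $\|e_{h_k}\|_{H^1}>\varepsilon$. The argument of \cite[Theorem 2.9(ii)]{HKM94} applies to any choice of discrete solutions, so such a sequence still converges strongly to the unique weak solution $u$, which contradicts $\|e_{h_k}\|_{H^1}>\varepsilon$. Hence the smallness of $\|e_h\|_{H^1}$ holds uniformly over all discrete solutions for $h\le h_0$, and the absorption above is valid.
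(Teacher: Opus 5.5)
Your computations are correct line by line, but your route does not prove the theorem as stated. Your bound closes only as $\|e_h\|_{H^1(\Omega)}\le C\bigl(h+\|e_h\|_{L^2(\Omega)}+\|e_h\|_{H^1(\Omega)}^2\bigr)$. You are therefore forced to absorb the quadratic term using the qualitative strong convergence of Remark~\ref{rem:strong-convergence}, upgraded by a contradiction argument to all discrete solutions. You must also treat $h>h_0$ separately with the a priori bound $M$. The resulting constant depends on $h_0$, which is obtained non-constructively and depends on the whole mesh family and on all the data. It also depends on $M$ and on $\|\K_u(\cdot,u)\nabla u\|_{L^\infty(\Omega)^d}$. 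The theorem instead asserts an estimate valid mesh by mesh for every $h>0$, with a constant depending only on $\|u\|_{H^2(\Omega)}$, $\|g\|_{H^{3/2}(\Gamma_D)}$, $\Omega$, $\sigma$, $\lambda$, $\Lambda$ and $C_u$.

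There is also a side issue with the linear Taylor term. With the paper's form $a(w;u,v)=\int_\Omega(\nabla u)^\top\K(x,w)\nabla v\,dx$, that term is $-\int_\Omega e_h\,(\K_u(x,u)^\top\nabla u)\cdot\nabla\theta_h\,dx$. For nonsymmetric $\K$, assumption \eqref{eq:drift-assumption} controls $\K_u\nabla u$, not $\K_u^\top\nabla u$. Your fallback $u\in W^{1,\infty}(\Omega)$ is not among the hypotheses either.

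The missing idea is that the ``main obstacle'' you identify is not one: you never need to bound $\|\,|\nabla u|\,e_h\|_{L^2(\Omega)}$ by $\|e_h\|_{L^2(\Omega)}$ alone. The paper uses only the Lipschitz bound from \eqref{K3} and H\"older, $\|\,|\nabla u|\,e_h\|_{L^2}\le\|\nabla u\|_{L^6}\|e_h\|_{L^3}$. It then applies the Lebesgue interpolation $\|e_h\|_{L^3}\le\|e_h\|_{L^2}^{1/2}\|e_h\|_{L^6}^{1/2}\le C\|e_h\|_{L^2}^{1/2}\|e_h\|_{H^1}^{1/2}$. This gives $\|e_h\|_{H^1}\le C\bigl(h+\|e_h\|_{L^2}^{1/2}\|e_h\|_{H^1}^{1/2}\bigr)$. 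Young's inequality, $C\|e_h\|_{L^2}^{1/2}\|e_h\|_{H^1}^{1/2}\le\frac12\|e_h\|_{H^1}+\frac{C^2}{2}\|e_h\|_{L^2}$, then closes the estimate for every $h$ and every discrete solution. This needs no Taylor expansion, no use of \eqref{eq:drift-assumption} beyond securing $u\in H^2(\Omega)$, and no convergence or smallness input. Qualitative convergence is needed only later, in the duality step of Theorem~\ref{thm:optimal-rates}; importing it here only weakens the result.
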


\begin{proof}
Fix \(h>0\). By the triangle inequality and Lemma~\ref{lem:interpolation-estimate},
\begin{equation}\label{eq:tri-energy}
\|e_h\|_{H^1(\Omega)}
\le
\|u-I_hu\|_{H^1(\Omega)}+\|I_hu-u_h\|_{H^1(\Omega)}
\le
C_1 h + \|I_hu-u_h\|_{H^1(\Omega)}.
\end{equation}
where $C_1 = C_1(\|u\|_{H^2(\Omega)},\|g\|_{H^{3/2}(\Gamma_D)},\Omega, \sigma)$ and $I_h u$ is defined as in the proof of Lemma~\ref{lem:interpolation-estimate}.

Next, set \(\eta_h:=I_hu-u_h\in V_h\). Subtracting the continuous and discrete variational identities and testing with \(v_h=\eta_h\), we obtain
\begin{equation}\label{eq:eta-identity}
a(u_h;\eta_h,\eta_h)
=
a(u_h;I_hu,\eta_h)-a(u;u,\eta_h).
\end{equation}
Using the uniform ellipticity of $\K$ \eqref{K2}, Poincar\'e's inequality, and \eqref{eq:eta-identity}, we obtain
\begin{equation}\label{eq:eta-coerc}
\frac{\lambda}{C_P^2}\|\eta_h\|_{H^1(\Omega)}^2
\le
\lambda\|\nabla\eta_h\|_{L^2(\Omega)}^2
\le
a(u_h;\eta_h,\eta_h)
\le
|a(u_h;I_hu-u,\eta_h)|+|a(u_h;u,\eta_h)-a(u;u,\eta_h)|.
\end{equation}
The first term is bounded by continuity of \(a(u_h;\cdot,\cdot)\) and employing once again Lemma~\ref{lem:interpolation-estimate}:
\begin{equation}\label{eq:error_bound_1}
    |a(u_h;I_hu-u,\eta_h)|
\le
\Lambda \|u-I_hu\|_{H^1(\Omega)}\|\eta_h\|_{H^1(\Omega)}
\le
C_2 h \|\eta_h\|_{H^1(\Omega)},
\end{equation}
where \(C_2=C_2(\|u\|_{H^2(\Omega)},\|g\|_{H^{3/2}(\Gamma_D)},\Omega,\sigma,\Lambda)>0\).

For the second term, we write
\[
a(u_h;u,\eta_h)-a(u;u,\eta_h)
=
\int_\Omega (\nabla u)^\top\bigl(\K(x,u_h)-\K(x,u)\bigr)\nabla\eta_h\,dx.
\]
The boundedness of the derivative of $\K$ in \eqref{K3} implies, via the mean value theorem, the Lipschitz continuity of $\K$ with respect to its second argument. Combining this with H\"older's inequality, we obtain
\[
|a(u_h;u,\eta_h)-a(u;u,\eta_h)|
\le
C_u\|\nabla u\|_{L^6(\Omega)}\,\|e_h\|_{L^3(\Omega)}\,\|\nabla\eta_h\|_{L^2(\Omega)} .
\]
Since \(d\le 3\) and, by \eqref{eq:drift-assumption} together with the regularity assumptions on \(\K\), we have \(u\in H^2(\Omega)\), it follows that \(\nabla u\in H^1(\Omega)\). Therefore, the Sobolev embedding \(H^1(\Omega)\hookrightarrow L^6(\Omega)\) (\cite[Theorem 4.12]{Adams}) yields
\[
\|\nabla u\|_{L^6(\Omega)}\le C\|\nabla u\|_{H^1(\Omega)}\le C\|u\|_{H^2(\Omega)}.
\]
Moreover, by interpolation between \(L^2(\Omega)\) and \(L^6(\Omega)\), and using once again the embedding \(H^1(\Omega)\hookrightarrow L^6(\Omega)\),
\[
\|e_h\|_{L^3(\Omega)}\le C\|e_h\|_{L^2(\Omega)}^{1/2}\|e_h\|_{L^6(\Omega)}^{1/2} \le C\|e_h\|_{L^2(\Omega)}^{1/2}\|e_h\|_{H^1(\Omega)}^{1/2}.
\]
Hence,
\begin{equation}\label{eq:error_bound_2}
|a(u_h;u,\eta_h)-a(u;u,\eta_h)|
\le
C_3\|e_h\|_{L^2(\Omega)}^{1/2}\|e_h\|_{H^1(\Omega)}^{1/2}\,\|\eta_h\|_{H^1(\Omega)},
\end{equation}
where \(C_3=C_3(\|u\|_{H^2(\Omega)},\Omega,C_u)>0\).

Inserting the bounds \eqref{eq:error_bound_1} and \eqref{eq:error_bound_2} into \eqref{eq:eta-coerc}, and dividing by $\dfrac{\lambda}{C_P^2}\|\eta_h\|_{H^1(\Omega)}$ (as $\eta_h\neq 0$; if it is zero, the error estimate \eqref{eq:energy-reduction} coincides with \eqref{eq:interpolation-estimate}), we obtain
\[
\|\eta_h\|_{H^1(\Omega)} \le \frac{C_P^2}{\lambda} \left( C_2 h + C_3 \|e_h\|_{L^2(\Omega)}^{1/2}\|e_h\|_{H^1(\Omega)}^{1/2} \right).
\]
Substituting the above estimate into \eqref{eq:tri-energy} and absorbing all constants into a generic constant \(C>0\), we obtain
\[
\|e_h\|_{H^1(\Omega)}
\le
C\Bigl(h+\|e_h\|_{L^2(\Omega)}^{1/2}\|e_h\|_{H^1(\Omega)}^{1/2}\Bigr).
\]
If \(\|e_h\|_{H^1(\Omega)}=0\), the estimate is trivial. Otherwise, by the AM-GM inequality,
\[
\|e_h\|_{H^1(\Omega)}
\le
Ch+\frac12 \|e_h\|_{H^1(\Omega)}+\frac{C^2}{2}\|e_h\|_{L^2(\Omega)},
\]
and therefore
\[
\|e_h\|_{H^1(\Omega)} \le 2Ch + C^2\|e_h\|_{L^2(\Omega)}.
\]
Redefining \(C\) yields \eqref{eq:energy-reduction}.
\end{proof}

To obtain the $L^2$-estimate, we next derive the adjoint problem associated with the linearization of \eqref{eq:strong_problem} and establish its well-posedness, after which we apply the Aubin--Nitsche argument. To do so, for a direction $v \in V \cap H^2(\Omega)$, we compute the G\^ateaux derivatives of the nonlinear differential operators $\mathcal L(u)\coloneqq -\nabla\cdot\bigl(\K(x,u)\nabla u\bigr)$ and $\mathcal N(u) \coloneqq \nu\cdot\bigl(\K(x,u)\nabla u\bigr)$:
\begin{align*}
D\mathcal L(u)\,v
&= -\nabla\cdot\Bigl(\K(x,u)\nabla v + v\,\K_u(x,u)\nabla u\Bigr)\qquad \text{on }\Omega\,,\\
D\mathcal N(u)\,v
&= \nu\cdot\Bigl(\K(x,u)\nabla v + v\,\K_u(x,u)\nabla u\Bigr)\qquad \text{on }\partial\Omega\,.
\end{align*}
Let $\phi \in V \cap H^2(\Omega)$ be a smooth test function. Writing 
\[
    A(x) \coloneqq \K(x,u(x))^\top\in\RR^{d \times d}, \qquad b(x) \coloneqq \K_u(x,u(x))\,\nabla u(x)\in\RR^d, 
\]
a double integration-by-parts yields the adjoint identity
\[
\langle D\mathcal L(u)\,v,\phi\rangle
=
\int_\Omega (\nabla v)^\top A\,\nabla\phi\,dx
+\int_\Omega v\,b^\top\nabla\phi\,dx
+\int_{\partial \Omega} \nu\cdot(A \nabla \phi)\, v\, d\sigma - \int_{\partial \Omega} D\mathcal N(u)\,v\, \phi\, d\sigma.
\]
The boundary terms vanish for all admissible variations $v\in V \cap H^2(\Omega)$ if we impose
\[
\phi=0 \ \text{on }\Gamma_D,
\qquad
\nu\cdot(A\nabla\phi)=0 \ \text{on }\Gamma_N,
\]
and we recall that on $\Gamma_N$ the admissible variations satisfy $D\mathcal N(u)\,v=0$, as the Neumann data is fixed. Hence, for a given right-hand side $\xi$, the adjoint problem reads:
find $\phi$ such that
\begin{equation}\label{eq:adjoint-strong}
\left\{
\begin{aligned}
-\nabla\cdot(A\nabla\phi)+b\cdot\nabla\phi &= \xi &&\text{in }\Omega,\\
\phi&=0 &&\text{on }\Gamma_D,\\
\nu\cdot(A\nabla\phi)&=0 &&\text{on }\Gamma_N.
\end{aligned}
\right.
\end{equation}
The associated weak formulation is:
find $\phi\in V$ such that
\begin{equation}\label{eq:adjoint-weak}
B(\phi,v) = (\xi,v)_{L^2(\Omega)}\qquad \forall v\in V,
\end{equation}
where the bilinear form $B:V\times V\to\RR$ is defined as
\begin{equation}\label{eq:def-bilinear}
B(\phi,v)
:=\int_\Omega (\nabla v)^\top A\,\nabla\phi\,dx
+\int_\Omega v\,b^\top\nabla\phi\,dx.
\end{equation}

\begin{lem}[Well-posedness of the adjoint problem]\label{lem:adjoint-wellposed}
Assume \eqref{eq:drift-assumption}, i.e., $b \in L^\infty(\Omega)^d$. Then for every $\xi\in L^2(\Omega)$ there exists a unique $\phi\in V$ solving \eqref{eq:adjoint-weak} (equivalently \eqref{eq:adjoint-strong} in the weak sense). Moreover, $\phi\in H^2(\Omega)$ and
$\|\phi\|_{H^2(\Omega)} \le C'\|\xi\|_{L^2(\Omega)}.$
\end{lem}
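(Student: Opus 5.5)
The plan has three steps: existence and uniqueness via the Fredholm alternative, then the $H^2$ bound via Proposition~\ref{prop:elliptic_regularity}. The form $B$ is not coercive in general, since the first-order term $\int_\Omega v\,b^\top\nabla\phi\,dx$ can destroy ellipticity. Its principal part, however, satisfies a G\aa rding inequality. By \eqref{K2} (note $\xi^\top A\xi=\xi^\top\K\xi$), Young's inequality, and $b\in L^\infty$,
\[
B(\phi,\phi)\ge \lambda\|\nabla\phi\|_{L^2}^2-\|b\|_{L^\infty}\|\phi\|_{L^2}\|\nabla\phi\|_{L^2}
\ge \tfrac{\lambda}{2}\|\nabla\phi\|_{L^2}^2-\tfrac{\|b\|_{L^\infty}^2}{2\lambda}\|\phi\|_{L^2}^2 .
\]
Thus $B+\mu(\cdot,\cdot)_{L^2}$ is $V$-elliptic for $\mu$ large. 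Since $V\hookrightarrow L^2$ compactly (Rellich), the operator associated with $B$ is Fredholm of index zero. Existence for every $\xi$ therefore follows once uniqueness is established.

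\textbf{Uniqueness (main obstacle).} Suppose $B(\phi,v)=0$ for all $v\in V$. Here I would use duality with the linearized primal operator: the transposed form $B^*(w,v)=B(v,w)=\int_\Omega (\nabla w)^\top \K(x,u)\nabla v+w\,b^\top\nabla v\,dx$ is the weak form of $D\mathcal L(u)$. By Fredholm theory, injectivity of $B$ is equivalent to injectivity of $B^*$. So it suffices to show that the linearized problem $-\nabla\cdot(\K(x,u)\nabla w+w\,b)=0$ with $w=0$ on $\Gamma_D$ and conormal zero on $\Gamma_N$ has only the trivial solution.

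This equation is in divergence form with the zero-order drift inside the flux, $-\nabla\cdot(\mathbb{A}\nabla w+bw)=0$, and $\Gamma_D$ has positive measure. I would invoke the classical weak maximum principle and uniqueness argument (Gilbarg--Trudinger, Theorem 8.1 / Section 8.2, adapted to mixed conditions; alternatively the test-function argument with $\min(w^+,\varepsilon)/\varepsilon$-type truncations, as in Trudinger's proof). The adapted argument goes as follows. Test with $v=\log(1+w^+/\delta)$ or with the truncation $T_\varepsilon(w)$. The drift term is then controlled by $\|b\|_{L^\infty}$ on the set $\{0<w<\varepsilon\}$, whose measure tends to zero. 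This forces $w^+\equiv0$, and symmetrically $w^-\equiv0$. The Neumann part causes no trouble because the natural boundary term vanishes, and the Dirichlet part supplies the Poincar\'e inequality. If this direct route proves delicate, a fallback is available: uniqueness of the linearization is exactly the condition under which \cite{HKM94} proves uniqueness of the nonlinear problem, so it can be cited or assumed as part of the well-posedness hypothesis. Either way, uniqueness gives existence, and the Fredholm bounded-inverse estimate gives $\|\phi\|_{H^1}\le C\|\xi\|_{L^2}$.

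\textbf{Regularity.} Rewrite the adjoint equation as $-\nabla\cdot(A\nabla\phi)=\xi-b\cdot\nabla\phi=:\tilde\xi$, with homogeneous data $\phi=0$ on $\Gamma_D$ and $\nu\cdot(A\nabla\phi)=0$ on $\Gamma_N$. Here $A=\K(\cdot,u)^\top\in W^{1,\infty}$, exactly as shown at the end of the proof of Proposition~\ref{prop:elliptic_regularity} using \eqref{K4} and \eqref{eq:drift-assumption}. Also $\tilde\xi\in L^2$ with $\|\tilde\xi\|_{L^2}\le\|\xi\|_{L^2}+\|b\|_{L^\infty}\|\nabla\phi\|_{L^2}\le C\|\xi\|_{L^2}$. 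The pure diffusion mixed problem is well posed by Lax--Milgram, because $A$ is coercive and $|\Gamma_D|>0$. Proposition~\ref{prop:elliptic_regularity} then yields $\phi\in H^2(\Omega)$ and $\|\phi\|_{H^2}\le C\|\tilde\xi\|_{L^2}\le C'\|\xi\|_{L^2}$.
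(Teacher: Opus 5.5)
Your proof is correct, and it differs from the paper's in how the Fredholm setup and, above all, uniqueness are obtained.

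\textbf{Paper's route.} The paper gets index zero by showing that $T^{-1}\circ B_2$ is compact, using the $H^2$-regularity of the principal part. It then applies Trudinger's maximum principle for generalized boundary value problems \emph{directly} to the adjoint operator $-\nabla\cdot(A\nabla\phi)+b\cdot\nabla\phi$. Since the drift sits outside the divergence, constants solve the homogeneous equation and no sign condition on $b$ is needed, so homogeneous solutions are constant and hence zero in $V$.

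\textbf{Your route.} You get index zero more cheaply (G\aa rding plus Rellich, no regularity needed). You then transpose to the linearized primal operator, whose form is $B^*(w,v)=\int_\Omega(\K\nabla w+bw)\cdot\nabla v\,dx$; you wrote $\K$ where $\K^\top$ belongs, which is harmless.

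Drop the reference to Gilbarg--Trudinger's Theorem~8.1 at that point. With the drift inside the flux, that theorem requires $\nabla\cdot b\le 0$, which you do not have, and the weak maximum principle can genuinely fail there. What you need is only uniqueness, and your truncation gives it. Take $v_\varepsilon:=\min(w^+,\varepsilon)/\varepsilon\in V$ and $E_\varepsilon:=\{0<w<\varepsilon\}$. On $E_\varepsilon$ the drift is multiplied by $w/\varepsilon\in(0,1)$, so $\lambda\|\nabla v_\varepsilon\|_{L^2}^2\le\|b\|_{L^\infty}|E_\varepsilon|^{1/2}\|\nabla v_\varepsilon\|_{L^2}$. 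Poincar\'e then gives $|\{w\ge\varepsilon\}|\le\|v_\varepsilon\|_{L^2}^2\le C|E_\varepsilon|\to 0$. Hence $w\le 0$, and symmetrically $w\ge 0$.

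This truncation would not work on the adjoint form itself, because the term $v_\varepsilon\,b\cdot\nabla\phi$ is not small on $\{\phi\ge\varepsilon\}$. So your transposition is exactly what buys a short, self-contained proof that avoids the external mixed-boundary maximum principle. The logarithmic test function and the fallback of assuming uniqueness of the linearization are unnecessary.

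Your $H^1$ bound via the bounded inverse theorem is also the right justification. The paper's ``take $v=\phi$'' step only closes if $C_P\|b\|_{L^\infty}<\lambda$. The $H^2$ step is the same as in the paper.
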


\begin{proof}
Using $A \in W^{1,\infty}(\Omega)^{d\times d}$, $b \in L^\infty(\Omega)^d$ and Poincar\'e's inequality, we note that the bilinear form $B:V\times V\to\RR$ is continuous
\[
|B(\phi,v)|
\le \|A\|_{L^\infty(\Omega)^{d\times d}}\|\nabla\phi\|_{L^2(\Omega)}\|\nabla v\|_{L^2(\Omega)}
+\|b\|_{L^\infty(\Omega)^d}\|\nabla\phi\|_{L^2(\Omega)}\|v\|_{L^2(\Omega)}
\le C\|\phi\|_V\|v\|_V.
\]
Next, we split $B = B_1 + B_2$ such that
\[
B_1(\phi,v):=\int_\Omega (\nabla v)^\top A\,\nabla\phi\,dx,
\qquad
(B_2\phi)(v):=\int_\Omega v\,b^\top\nabla\phi\,dx.
\]
By uniform ellipticity of $A$, its symmetric part $(A + A^\T)/2$ is also uniformly elliptic and thus $B_1$ is coercive on $V$, $B_1(\phi,\phi)\ge \lambda\|\phi\|_V^2$. Hence the operator $T:V\to V'$ induced by $B_1$ is an isomorphism.

Since $A\in W^{1,\infty}(\Omega)^{d\times d}$, the mixed boundary problem associated with $B_1$ satisfies the hypotheses of Proposition \ref{prop:elliptic_regularity}; consequently, the solution operator $T^{-1}:V'\to V$ maps $L^2(\Omega)\subset V'$ boundedly into $H^2(\Omega)\cap V$.

Since $B_2:V\to L^2(\Omega)\hookrightarrow V'$ is bounded and the embedding $H^2(\Omega)\hookrightarrow H^1(\Omega)$ is compact, the operator $K:=T^{-1}\circ B_2:V\to V$ is compact.
Therefore, the operator $I+K:V\to V$ is Fredholm of index $0$.
Consequently, solvability of \eqref{eq:adjoint-weak} reduces to uniqueness of the homogeneous problem:
\[
-\nabla\cdot(A\nabla\phi) + b\cdot\nabla\phi = 0 \quad \text{in }\Omega,
\qquad
\phi=0 \text{ on }\Gamma_D,
\qquad
\nu\cdot(A\nabla\phi)=0 \text{ on }\Gamma_N.
\]
Trudinger's maximum principle for generalized boundary value problems \cite[Theorem~6]{Trudinger77} can be applied to the above problem (by choosing $\gamma = 1$ in their assumption (22)). The theorem then implies that any homogeneous weak solution is constant in $\Omega$. Since the space $V$ does not contain the constant function $1$ (because $|\Gamma_D|>0$),
the only possible solution is $\phi\equiv 0$.

By Fredholm index $0$ and $\ker(I+K)=\{0\}$, the operator $I+K$ is bijective; thus \eqref{eq:adjoint-weak} has a unique solution $\phi \in H^1(\Omega)$ for each $\xi\in L^2(\Omega)$. Moreover, taking $v=\phi$ in \eqref{eq:adjoint-weak} and using the uniform ellipticity of $A$,
the boundedness of $b$, and the Poincaré inequality, we obtain the estimate
\begin{equation}\label{eq:adjoint-H1-estimate}
\|\phi\|_{H^1(\Omega)} \le C \|\xi\|_{L^2(\Omega)}.
\end{equation}
It remains to prove the $H^2$-regularity. We rewrite equation \eqref{eq:adjoint-strong} solved by $\phi$ as
\[
-\nabla\cdot(A\nabla\phi)=\xi-b\cdot\nabla\phi.
\]
Because $b\in L^\infty(\Omega)^d$ and $\phi\in H^1(\Omega)$, the right-hand
side belongs to $L^2(\Omega)$. The boundary conditions remain homogeneous,
therefore, Proposition~\ref{prop:elliptic_regularity} applied to this mixed problem yields
$\phi\in H^2(\Omega)$. Moreover,
\[
\|\phi\|_{H^2(\Omega)}
\le C \|\xi-b\cdot\nabla\phi\|_{L^2(\Omega)}
\le C\bigl(\|\xi\|_{L^2(\Omega)}+\|\phi\|_{H^1(\Omega)}\bigr).
\]
Using the $H^1$-estimate obtained above, we conclude that
\[
\|\phi\|_{H^2(\Omega)} \le C\|\xi\|_{L^2(\Omega)}.
\]
\end{proof}

\begin{thm}[Optimal $H^1$ and $L^2$ error estimates]\label{thm:optimal-rates}
Assume \eqref{K1}--\eqref{K4}, \eqref{eq:estimates-assumption}, and \eqref{eq:drift-assumption} for the unique weak solution $u$ of \eqref{eq:strong_problem}. Then there exist constants $C = C\bigl(\|u\|_{H^2(\Omega)},\|g\|_{H^{3/2}(\Gamma_D)},\|b\|_{L^\infty(\Omega)^d}, \Omega, \sigma,\lambda,\Lambda,C_u\bigr) > 0$ and $h_0>0$ such that, for every $h\in(0,h_0)$ and every discrete solution $u_h$ of \eqref{eq:weak_problem_discrete}, the error $e_h:=u-u_h$ satisfies
\begin{equation}
    \|e_h\|_{L^2(\Omega)} \le C h^2, \qquad \|e_h\|_{H^1(\Omega)} \le C h.
\end{equation}
\end{thm}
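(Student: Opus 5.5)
The plan is an Aubin--Nitsche duality argument based on the adjoint problem of Lemma~\ref{lem:adjoint-wellposed}. The nonlinear remainder it produces will be absorbed using the preliminary estimate of Theorem~\ref{thm:energy-reduction} together with the strong $H^1$-convergence of Remark~\ref{rem:strong-convergence}; the latter is where $h_0$ comes from. One caveat first: duality needs $e_h\in V$, which holds if $u_{g,h}=I_hu_g$ and the Dirichlet data are interpolated exactly, as in Lemma~\ref{lem:interpolation-estimate}. I will assume this and treat any boundary perturbation $g-g_h$ separately.

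\textbf{Step 1: duality representation.} Take $\xi=e_h$ and let $\phi\in V\cap H^2(\Omega)$ solve \eqref{eq:adjoint-weak}, so that $\|\phi\|_{H^2(\Omega)}\le C'\|e_h\|_{L^2(\Omega)}$. Testing with $v=e_h$ gives
\[
\|e_h\|_{L^2(\Omega)}^2=B(\phi,e_h)=\int_\Omega (\nabla e_h)^\top \K(x,u)^\top\nabla\phi\,dx+\int_\Omega e_h\,(\K_u(x,u)\nabla u)\cdot\nabla\phi\,dx .
\]
This is the linearization of $a(u;u,\phi)-a(u_h;u_h,\phi)$ around $u$.

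\textbf{Step 2: Galerkin orthogonality and the Taylor remainder.} Subtracting \eqref{eq:weak_problem_discrete} from \eqref{eq:weak_problem} gives $a(u;u,\phi_h)-a(u_h;u_h,\phi_h)=0$ for all $\phi_h\in V_h$. Taylor expansion in the second argument of $\K$ then yields the exact identity
\[
a(u;u,\chi)-a(u_h;u_h,\chi)=B(\chi,e_h)-R(\chi),
\]
with
\[
R(\chi)=\int_\Omega (\nabla e_h)^\top\bigl(\K(x,u)-\K(x,u_h)\bigr)\nabla\chi\,dx+\int_\Omega (\nabla u)^\top \widetilde{\K}_{uu}\,\nabla\chi\,\tfrac{e_h^2}{2}\,dx .
\]
Here $\widetilde{\K}_{uu}$ is the integral-form second-order remainder, bounded by $C_u$ through \eqref{K3}. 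The first term is a cross term of the form $\nabla e_h\cdot(\K_u e_h)$. Applying this with $\chi=\phi-\phi_h$, where $\phi_h=I_h\phi$, and with $\chi=\phi_h$ gives
\[
\|e_h\|_{L^2}^2=B(\phi-I_h\phi,e_h)-R(\phi-I_h\phi)+R(\phi),
\]
because $B(\chi,e_h)-R(\chi)=0$ for $\chi\in V_h$.

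\textbf{Step 3: bounding the terms.} The first term satisfies
\[
|B(\phi-I_h\phi,e_h)|\le C\|e_h\|_{H^1}\,h\|\phi\|_{H^2}\le Ch\|e_h\|_{H^1}\|e_h\|_{L^2},
\]
using $b\in L^\infty$ and standard interpolation. The remainder is handled by Hölder together with the embeddings $H^1\hookrightarrow L^6$ and $\nabla\phi\in H^1\hookrightarrow L^6$ for $d\le3$:
\[
|R(\chi)|\le C\bigl(\|\nabla e_h\|_{L^2}\|e_h\|_{L^3}\|\nabla\chi\|_{L^6}+\|\nabla u\|_{L^6}\|e_h\|_{L^{3}}^2\|\nabla\chi\|_{L^6}\bigr)\le C\|e_h\|_{H^1}^2\|\phi\|_{H^2}.
\]
This applies to both $\chi=\phi$ and $\chi=\phi-I_h\phi$, since $\|\nabla I_h\phi\|_{L^6}\lesssim\|\phi\|_{H^2}$ on shape-regular meshes. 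Dividing by $\|e_h\|_{L^2}$ gives
\[
\|e_h\|_{L^2}\le C\bigl(h\|e_h\|_{H^1}+\|e_h\|_{H^1}^2\bigr).
\]

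\textbf{Step 4: closing the estimate (main obstacle).} Insert Theorem~\ref{thm:energy-reduction}, $\|e_h\|_{H^1}\le C(h+\|e_h\|_{L^2})$, to obtain
\[
\|e_h\|_{L^2}\le C\bigl(h^2+h\|e_h\|_{L^2}+\|e_h\|_{L^2}^2\bigr).
\]
The term $h\|e_h\|_{L^2}$ is absorbed once $h<1/(2C)$. The quadratic term is the delicate point, and it is handled by smallness rather than a quantitative rate. By Remark~\ref{rem:strong-convergence}, $\|e_h\|_{L^2}\to0$, so for $h<h_0$ we have $C\|e_h\|_{L^2}\le 1/4$ and the quadratic term is absorbed as well.

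This step is the obstacle because it must hold for \emph{every} discrete solution. The convergence in \cite{HKM94} is stated for sequences of discrete solutions, so I will argue by contradiction. If no such $h_0$ existed, there would be meshes with $h_k\to0$ and solutions $u_{h_k}$ with $\|e_{h_k}\|_{L^2}$ bounded below. Yet \cite[Theorem 2.9]{HKM94} applies to any choice of solutions along the sequence, which forces $e_{h_k}\to0$, a contradiction. This yields $\|e_h\|_{L^2}\le Ch^2$, and Theorem~\ref{thm:energy-reduction} then gives $\|e_h\|_{H^1}\le Ch$.
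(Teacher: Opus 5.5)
Your proposal is correct and follows the paper's proof essentially step for step: the same adjoint problem with $\xi=e_h$, the same split into $B(\phi-I_h\phi,e_h)$ plus a Galerkin-orthogonality remainder bounded by $C\|e_h\|_{H^1(\Omega)}^2\|e_h\|_{L^2(\Omega)}$, and the same closure via Theorem~\ref{thm:energy-reduction} and the strong convergence of Remark~\ref{rem:strong-convergence}. The differences are minor refinements. Your integral-form Taylor expansion about $u$ leaves $\nabla u\in L^6(\Omega)$ rather than $\nabla u_h$ in the second-order term, so you need neither a pointwise mean value theorem for the matrix-valued $\K$ nor a uniform bound on $\|\nabla u_h\|_{L^2(\Omega)}$ there; and your contradiction argument makes explicit the uniformity of $h_0$ over all discrete solutions, which the paper uses tacitly.
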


\begin{proof}
Since $u_h-u_g\in V_h\subset V$ and $u-u_g\in V$, we have $e_h\in V$. By Theorem~\ref{thm:energy-reduction}, there exists a constant $C > 0$ such that
\begin{equation}\label{eq:energy-pre}
\|e_h\|_{H^1(\Omega)} \le C\bigl(h+\|e_h\|_{L^2(\Omega)}\bigr).
\end{equation}
We now estimate $\|e_h\|_{L^2(\Omega)}$ by a duality argument, using the Aubin--Nitsche trick. By Lemma~\ref{lem:adjoint-wellposed}, there exists a unique solution $\phi\in V\cap H^2(\Omega)$ of the adjoint problem \eqref{eq:adjoint-weak} with $\xi=e_h$, such that $\|\phi\|_{H^2(\Omega)}\le C\|e_h\|_{L^2(\Omega)}$ and
\begin{equation}\label{eq:adjoint-weak-proof}
B(\phi,v)=(e_h,v)_{L^2(\Omega)}
\qquad \forall v\in V.
\end{equation}
Taking $v=e_h\in V$ in \eqref{eq:adjoint-weak-proof}, and letting $\phi_h:=I_h\phi\in V_h$ be the nodal piecewise linear interpolant, we obtain
\begin{equation}\label{eq:dual-split}
\|e_h\|_{L^2(\Omega)}^2=B(\phi,e_h)=B(\phi-\phi_h,e_h)+B(\phi_h,e_h).
\end{equation}

\noindent Using the continuity of $B$, the standard interpolation estimates of \cite[Theorem 4.4.20]{BrennerScott}, and the elliptic regularity from Lemma~\ref{lem:adjoint-wellposed},
\begin{align}
|B(\phi-\phi_h,e_h)|
&\le C\|e_h\|_{H^1(\Omega)}\|\phi-\phi_h\|_{H^1(\Omega)}
\nonumber\\
&\le Ch\|e_h\|_{H^1(\Omega)}\|\phi\|_{H^2(\Omega)}
\le Ch\|e_h\|_{H^1(\Omega)}\|e_h\|_{L^2(\Omega)}.
\label{eq:first-term}
\end{align}

\noindent Regarding the second term, recall from the definition of $B$ that
\[
B(\phi_h,e_h)
=
\int_\Omega (\nabla e_h)^\top \K^\T(x,u)\nabla\phi_h\,dx
+
\int_\Omega e_h\left[(\K_u(x,u)\nabla u)^\top\nabla\phi_h\right]\,dx.
\]
Since
\[
\int_\Omega (\nabla e_h)^\top \K^\T(x,u)\nabla\phi_h\,dx
=
a(u;u,\phi_h)-a(u;u_h,\phi_h),
\]
and since $\phi_h\in V_h\subset V$, the continuous and discrete weak formulations give
\[
a(u;u,\phi_h)=\ell(\phi_h)=a(u_h;u_h,\phi_h).
\]
Therefore
\begin{align}
B(\phi_h,e_h)
&=
a(u_h;u_h,\phi_h)-a(u;u_h,\phi_h)
+
\int_\Omega e_h(\K_u(x,u)\nabla u)^\top\nabla\phi_h\,dx
\nonumber\\
&=
\int_\Omega (\nabla u_h)^\top(\K(x,u_h)-\K(x,u))\nabla\phi_h\,dx
+
\int_\Omega e_h\left[(\K_u(x,u)\nabla u)^\top\nabla\phi_h\right]\,dx.
\label{eq:Bphieh}
\end{align}

\noindent By the mean value theorem, for a.e.\ $x\in\Omega$ there exists $\theta_h(x)$ between $u(x)$ and $u_h(x)$ such that
\[
\K(x,u_h)-\K(x,u)=-\K_u(x,\theta_h)e_h.
\]
Substituting into \eqref{eq:Bphieh} yields
\[
B(\phi_h,e_h)
=
\int_\Omega e_h\Bigl[(\K_u(x,u)\nabla u-\K_u(x,\theta_h)\nabla u_h)^\top\nabla\phi_h\Bigr]dx.
\]
Writing
\[
\K_u(x,u)\nabla u-\K_u(x,\theta_h)\nabla u_h
=
\K_u(x,u)\nabla e_h
+
(\K_u(x,u)-\K_u(x,\theta_h))\nabla u_h,
\]
we obtain
\[
B(\phi_h,e_h)
=
\underbrace{\int_\Omega e_h\left[(\K_u(x,u)\nabla e_h)^\top\nabla\phi_h\right]\,dx}_{R_1}
+
\underbrace{\int_\Omega e_h\left[\bigl((\K_u(x,u)-\K_u(x,\theta_h))\nabla u_h\bigr)^\top\nabla\phi_h\right]\,dx}_{R_2}.
\]
Using the bound on $\K_u$ \eqref{K3} and H\"older's inequality, we obtain
\[
    |R_1| \le C\|e_h\|_{L^6(\Omega)}\|\nabla e_h\|_{L^2(\Omega)}\|\nabla\phi_h\|_{L^3(\Omega)} \le C\|e_h\|_{L^6(\Omega)}\|e_h\|_{H^1(\Omega)}\|\nabla\phi_h\|_{L^3(\Omega)}.
\]

\noindent Since $d\le3$, the Sobolev embedding $H^1(\Omega)\hookrightarrow L^6(\Omega)$ and
$H^2(\Omega)\hookrightarrow W^{1,6}(\Omega)\hookrightarrow W^{1,3}(\Omega)$ hold, see \cite[Theorem 4.12]{Adams}. Moreover, the
nodal piecewise linear interpolant is stable in $W^{1,p}(\Omega)$ for $1\le p\le \infty$ on conforming
shape-regular meshes (see \cite[Thm.~4.4.4]{BrennerScott}). In particular,
\begin{equation}\label{eq:interpolant-embedding}
\|e_h\|_{L^6(\Omega)}\le C\|e_h\|_{H^1(\Omega)}, \quad
\|\nabla\phi_h\|_{L^3(\Omega)}\le C\|\nabla\phi\|_{L^3(\Omega)}\le C\|\phi\|_{H^2(\Omega)},
\quad
\|\nabla\phi_h\|_{L^6(\Omega)}\le C\|\phi\|_{H^2(\Omega)}.
\end{equation}
From \eqref{eq:interpolant-embedding} and the elliptic regularity from
Lemma~\ref{lem:adjoint-wellposed}, we infer
\begin{equation}\label{eq:R1}
|R_1| \le C\|e_h\|_{H^1(\Omega)}^2\|e_h\|_{L^2(\Omega)}.
\end{equation}

\noindent The bound on $\K_{uu}$ \eqref{K3} implies the Lipschitz property of $\K_u$, namely
$\|\K_u(x,u)-\K_u(x,\theta_h)\|\le C|e|$. Hence, using H\"older again,
\begin{align}
|R_2|
&\le C\int_\Omega |e|^2|\nabla u_h||\nabla\phi_h|\,dx
\le
C\|e_h\|_{L^6(\Omega)}^2
\|\nabla u_h\|_{L^2(\Omega)}
\|\nabla\phi_h\|_{L^6(\Omega)}.
\end{align}
Making once again use of \eqref{eq:interpolant-embedding} and of the Sobolev embedding $H^1(\Omega)\hookrightarrow L^6(\Omega)$, we obtain
\[
    |R_2| \le C\|e_h\|_{H^1(\Omega)}^2\|u\|_{H^1(\Omega)}\|e_h\|_{L^2(\Omega)}.
\]
The sequence $\{u_h\}$ is bounded in $H^1(\Omega)$. Hence, for sufficiently small $h$, we obtain
\begin{equation}\label{eq:R2}
|R_2|
\le C\|e_h\|_{H^1(\Omega)}^2\|e_h\|_{L^2(\Omega)}.
\end{equation}

\noindent Substituting \eqref{eq:first-term}, \eqref{eq:R1} and \eqref{eq:R2} into \eqref{eq:dual-split}, we obtain
\[
\|e_h\|_{L^2(\Omega)}^2
\le
Ch\|e_h\|_{H^1(\Omega)}\|e_h\|_{L^2(\Omega)}
+
C\|e_h\|_{H^1(\Omega)}^2\|e_h\|_{L^2(\Omega)}.
\]
Dividing by $\|e_h\|_{L^2(\Omega)}$ (the case $\|e_h\|_{L^2(\Omega)}=0$ being trivial), we infer
\[
\|e_h\|_{L^2(\Omega)}
\le
Ch\|e_h\|_{H^1(\Omega)}
+
C\|e_h\|_{H^1(\Omega)}^2.
\]
Using now the preliminary estimate \eqref{eq:energy-pre}, we obtain
\[
\|e_h\|_{L^2(\Omega)}
\le
Ch\bigl(h+\|e_h\|_{L^2(\Omega)}\bigr)
+
C\bigl(h+\|e_h\|_{L^2(\Omega)}\bigr)^2.
\]
Hence
\[
\|e_h\|_{L^2(\Omega)}
\le
C\Bigl(
h^2+h\|e_h\|_{L^2(\Omega)}+\|e_h\|_{L^2(\Omega)}^2
\Bigr).
\]
Choose $h_1>0$ such that $Ch\le \frac14$ for all $h\in(0,h_1)$. From Remark \ref{rem:strong-convergence}, $u_h \to u$ in $H^1(\Omega)$, and thus we have
$\|e_h\|_{L^2(\Omega)} \to 0$. Therefore, there exists $h_2>0$ such that
$C\|e_h\|_{L^2(\Omega)}\le \frac14$ for all $h\in(0,h_2)$. Finally, for
$0 < h < h_0 \coloneqq \min\{h_1,h_2\}$,
\[
\|e_h\|_{L^2(\Omega)}
\le
Ch^2+\frac14\|e_h\|_{L^2(\Omega)}+\frac14\|e_h\|_{L^2(\Omega)},
\]
whence
\[
\|e_h\|_{L^2(\Omega)}\le Ch^2.
\]
Substituting this bound into \eqref{eq:energy-pre}, we also obtain $\|e_h\|_{H^1(\Omega)}\le Ch$.
\end{proof}

\section{Newton convergence for the discrete Galerkin system and transfer to the exact solution}\label{seq:NK}

In this section, we show that for each fixed mesh size $h$, the nonlinear algebraic system associated with the Galerkin discretization \eqref{eq:weak_problem_discrete} can be solved by Newton's method under a local Newton--Kantorovich assumption at the initial iterate. The resulting Newton iterates converge to a discrete Galerkin solution, and the algebraic error can be combined with the optimal finite element error estimates from Theorem~\ref{thm:optimal-rates} to obtain convergence to the exact weak solution of \eqref{eq:weak_problem}.

Let $\{\varphi_i\}_{i=1}^{N_h}$ be the standard Lagrange basis of $V_h$, where $N_h$ is the number of free nodes in $\mathcal{T}_h$. Then any $u_h \in u_{g,h}+V_h$ can be expressed in terms of its nodal values $\{U_j\}_{j=1}^{N_h}$ as
\[
u_h = u_{g,h} + \sum_{j=1}^{N_h} U_j \varphi_j ,
\qquad U=(U_1,\dots,U_{N_h})^\top \in \mathbb{R}^{N_h}.
\]
We define the discrete residual mapping $F_h:\mathbb{R}^{N_h}\to\mathbb{R}^{N_h}$ by
\[
(F_h(U))_i
:=
a(u_h;u_h,\varphi_i)-\ell(\varphi_i),
\qquad i=1,\dots,N_h.
\]
Then $F_h(U)=0$ if and only if the corresponding function $u_h\in u_{g,h}+V_h$ is a discrete solution of \eqref{eq:weak_problem_discrete}.

Since $U \mapsto u_h(U)$ is affine and $F_h(U)$ is simply the
continuous residual evaluated at $u_h(U)$ and tested against $\varphi_i$,
the differentiation argument used for the continuous operator applies
identically. Consequently, the mapping $F_h$ is continuously differentiable
on $\mathbb R^{N_h}$. More precisely, for any $U, V\in\mathbb R^{N_h}$ one has
\[
(F_h'(U)V)_i
=
\int_\Omega
\bigl(\K(x,u_h(U))\nabla \delta u_h(V)\bigr)\cdot\nabla\varphi_i\,dx
+
\int_\Omega
\bigl(\K_u(x,u_h(U))\,\delta u_h(V)\,\nabla u_h(U)\bigr)
\cdot\nabla\varphi_i\,dx,
\]
for $i=1,\dots,N_h$, where
\[
\delta u_h(V):=\sum_{j=1}^{N_h}V_j\varphi_j\in V_h.
\]

In particular, the Jacobian matrix $J_h(U)=F_h'(U)\in\mathbb R^{N_h\times N_h}$
has the entries
\[
(J_h(U))_{ij}
=
\int_\Omega
\bigl(\K(x,u_h(U))\nabla\varphi_j\bigr)\cdot\nabla\varphi_i\,dx
+
\int_\Omega
\bigl(\K_u(x,u_h(U))\,\varphi_j\,\nabla u_h(U)\bigr)
\cdot\nabla\varphi_i\,dx .
\]
This expression shows that the Jacobian matrix $J_h(U)$
corresponds to the Galerkin discretization of the linearized
operator $D\mathcal{L}(u_h)$.

\begin{lem}\label{lem:Fhprime-lipschitz}
The Jacobian $F_h'$ is locally Lipschitz on $\mathbb R^{N_h}$, that is,
for every bounded set $B\subset\mathbb R^{N_h}$ there exists a constant
$L_B>0$ such that
\[
\opnorm{F_h'(U)-F_h'(V)}
\le
L_B \|U-V\|_2,
\qquad \forall U,V\in B .
\]
Consequently, $F_h\in C^1(\mathbb R^{N_h};\mathbb R^{N_h})$.
\end{lem}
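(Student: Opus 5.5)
We estimate the Jacobian entries directly. Fix a bounded set $B\subset\mathbb R^{N_h}$ and $U,V\in B$. Write $u:=u_h(U)$, $w:=u_h(V)$, and for an arbitrary direction $Z\in\mathbb R^{N_h}$ set $z:=\delta u_h(Z)\in V_h$. From the formula for $F_h'$ we have the splitting
\[
\bigl((F_h'(U)-F_h'(V))Z\bigr)_i = I_1+I_2+I_3,
\]
with
\[
I_1=\int_\Omega \bigl((\K(x,u)-\K(x,w))\nabla z\bigr)\cdot\nabla\varphi_i\,dx,\qquad
I_2=\int_\Omega \bigl((\K_u(x,u)-\K_u(x,w))\,z\,\nabla u\bigr)\cdot\nabla\varphi_i\,dx,
\]
\[
I_3=\int_\Omega \bigl(\K_u(x,w)\,z\,\nabla(u-w)\bigr)\cdot\nabla\varphi_i\,dx .
\]
By \eqref{K3} and the mean value theorem in the second argument, both $\K(x,\cdot)$ and $\K_u(x,\cdot)$ are Lipschitz with constant $C_u$. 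Hence
\[
\|\K(x,u)-\K(x,w)\|_{\mathrm{op}}+\|\K_u(x,u)-\K_u(x,w)\|_{\mathrm{op}}\le 2C_u|u-w| ,
\]
and also $\|\K_u(x,w)\|_{\mathrm{op}}\le C_u$.

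The key point is that, for fixed $h$, $V_h$ is finite dimensional, so all norms on it are equivalent. Since $u-w=\delta u_h(U-V)$, the lifting cancels, and we obtain
\[
\|u-w\|_{L^\infty}+\|\nabla(u-w)\|_{L^\infty}\le c_h\|U-V\|_2 ,\qquad
\|z\|_{L^\infty}+\|\nabla z\|_{L^\infty}\le c_h\|Z\|_2 ,
\]
where $c_h$ depends on the mesh, for instance through $\max_j\|\varphi_j\|_{W^{1,\infty}}$ times $\sqrt{N_h}$. Similarly, for $U\in B$,
\[
\|\nabla u\|_{L^\infty}\le \|\nabla u_{g,h}\|_{L^\infty}+c_h\sup_{U\in B}\|U\|_2=:M_B<\infty ,
\]
because $u_{g,h}$ is piecewise linear. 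This is the only place where boundedness of $B$ enters, and it is needed only for $I_2$.

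Combining these bounds with the support property of $\varphi_i$ gives
\[
|I_1|+|I_2|+|I_3|\le C_u\,c_h^2\,(2+2M_B+1)\,\|U-V\|_2\|Z\|_2\,\|\nabla\varphi_i\|_{L^1(\Omega)} .
\]
Summing the squares over $i$ yields
\[
\|(F_h'(U)-F_h'(V))Z\|_2\le L_B\|U-V\|_2\|Z\|_2 ,
\]
with $L_B:=C_u c_h^2(3+2M_B)\bigl(\sum_i\|\nabla\varphi_i\|_{L^1}^2\bigr)^{1/2}$. Taking the supremum over $\|Z\|_2=1$ gives the operator-norm statement.

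For the consequence, Lipschitz continuity on bounded sets implies that $F_h'$ is continuous on $\mathbb R^{N_h}$. To conclude $F_h\in C^1$, it remains to check that $F_h'$ is indeed the Fréchet derivative. We expand $F_h(U+Z)-F_h(U)-F_h'(U)Z$ entrywise using Taylor's formula for $\K(x,\cdot)$ with the $\K_{uu}$ bound, the same $L^\infty$ norm equivalences, and the bound $\|\nabla z\|_{L^\infty}\le c_h\|Z\|_2$. This shows the remainder is $O(\|Z\|_2^2)$.

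Obstacle: none. The only care needed is in bookkeeping the $h$-dependent constants, which are harmless here since $h$ is fixed.
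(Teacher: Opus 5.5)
Your proof is correct. It uses the same three-term splitting as the paper, but it bounds the terms differently.

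\textbf{Your route.} You put $u-w$, $z$ and both of their gradients in $L^\infty$ via finite-dimensional norm equivalence, with an unspecified mesh constant $c_h$. You use the global bound $C_u$ from \eqref{K3} and close with $\|\nabla\varphi_i\|_{L^1(\Omega)}$.

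\textbf{The paper's route.} The paper keeps only the function values in $L^\infty$. There the $P_1$ nodal property gives $\|u_h(U)-u_h(V)\|_{L^\infty(\Omega)}=\max_i|U_i-V_i|\le\|U-V\|_2$, with no $h$-dependence. The gradients are measured in $L^2$ through Lemma~\ref{lem:grad-nodal-bound}, i.e.\ $\|\nabla\delta u_h(Z)\|_{L^2(\Omega)}\le C_\blacktriangle h^{\frac d2-1}\|Z\|_2$. Each term is then estimated by Cauchy--Schwarz and summed over $i$ using the bounded overlap of the supports. The local constants $M_1(B)$ and $M_2(B)$ replace $C_u$.

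\textbf{Comparison.} For the qualitative statement, which only asks for existence of some $L_B$ at fixed $h$, your route is quicker and fully sufficient. Your explicit check that $F_h'$ is the Fr\'echet derivative (remainder $O(\|Z\|_2^2)$ via the $\K_{uu}$ bound) is also more complete than the paper's appeal to ``the differentiability established above''. What the paper's route buys is an explicit, computable $L_B$ with mild $h$-scaling. That is exactly what Remark~\ref{rem:practical_NK} and the numerical tables rely on. Your $c_h$, built from $\sqrt{N_h}\,\max_j\|\varphi_j\|_{W^{1,\infty}}$, grows like a sizable negative power of $h$. It enters squared, and again through $M_B$, so your $L_B$ would make the Newton--Kantorovich test \eqref{eq:practical-Lpishitz-bound} far harder to satisfy in practice. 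A minor difference: your $M_B$ needs $\nabla u_{g,h}\in L^\infty$, which holds for the piecewise linear liftings used here, whereas the paper's bound only needs $\nabla u_{g,h}\in L^2$.
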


\begin{proof}
Let $B\subset\mathbb R^{N_h}$ be bounded and let $U,V\in B$ with the
corresponding finite element functions
$u_h(U),u_h(V)\in u_{g, h} + V_h$.
For $Z\in\mathbb R^{N_h}$ define
\[
\delta u_h(Z):=\sum_{j=1}^{N_h} Z_j\varphi_j .
\]
Using the expression of the Jacobian derived above,
\begin{align}
((F_h'(U)-F_h'(V))Z)_i
={}&
\int_\Omega
(\K(x,u_h(U))-\K(x,u_h(V)))
\nabla\delta u_h(Z)\cdot\nabla\varphi_i\,dx \label{eq:Lipschitz_integral_1}
\\
&+
\int_\Omega
(\K_u(x,u_h(U))-\K_u(x,u_h(V)))
\delta u_h(Z)\,\nabla u_h(U)\cdot\nabla\varphi_i\,dx \label{eq:Lipschitz_integral_2}
\\
&+
\int_\Omega
\K_u(x,u_h(V))\,\delta u_h(Z)\,
\nabla(u_h(U)-u_h(V))\cdot\nabla\varphi_i\,dx . \label{eq:Lipschitz_integral_3}
\end{align}
Since $\K$ is $C^2$ in the second variable \eqref{K3}, the mean value theorem
implies that for the constants
\[
M_1(B):=\sup_{x\in\Omega,\ s\in I_B}\|\K_u(x,s)\|,
\qquad
M_2(B):=\sup_{x\in\Omega,\ s\in I_B}\|\K_{uu}(x,s)\|,
\]
where $I_B$ contains the range of $u_h(W)$ for $W\in B$, one has
\[
\|\K(\cdot,u_h(U))-\K(\cdot,u_h(V))\|_{L^\infty(\Omega)}
\le
M_1(B)\|u_h(U)-u_h(V)\|_{L^\infty(\Omega)},
\]
and
\[
\|\K_u(\cdot,u_h(U))-\K_u(\cdot,u_h(V))\|_{L^\infty(\Omega)}
\le
M_2(B)\|u_h(U)-u_h(V)\|_{L^\infty(\Omega)}.
\]
Because $V_h$ is the space of $P_1$ Lagrange finite elements,
\[
\|u_h(U)-u_h(V)\|_{L^\infty(\Omega)}
=
\max_i |U_i-V_i|
\le
\|U-V\|_2,
\]
and
\[
\|\delta u_h(Z)\|_{L^\infty(\Omega)}
\le
\|Z\|_2 .
\]
Moreover, Lemma \ref{lem:grad-nodal-bound} yields
\[
\|\nabla \delta u_h(Z)\|_{L^2(\Omega)}
\le
C_\blacktriangle h^{\frac d2-1}\|Z\|_2,
\qquad
\|\nabla (u_h(U)-u_h(V))\|_{L^2(\Omega)}
\le
C_\blacktriangle h^{\frac d2-1}\|U-V\|_2,
\]
where $C_\blacktriangle = \left( \frac{(d+1)\sigma^2 N_*}{d!} \right)^{1/2}$. Since $B$ is bounded, there exists $R_B>0$ such that
$\|U\|_2\le R_B$ for all $U\in B$. Moreover, since $u_h(U)-u_{g,h}\in V_h$, Lemma~\ref{lem:grad-nodal-bound} yields
\[
\|\nabla u_h(U)\|_{L^2(\Omega)}
\le
\|\nabla u_{g,h}\|_{L^2(\Omega)}
+
\|\nabla (u_h(U)-u_{g,h})\|_{L^2(\Omega)}
\le
\|\nabla u_{g,h}\|_{L^2(\Omega)}
+
C_\blacktriangle h^{\frac d2-1}R_B.
\]
Applying the Cauchy--Schwarz inequality to each of the three terms
\eqref{eq:Lipschitz_integral_1}--\eqref{eq:Lipschitz_integral_3},
then summing the resulting bounds over $i=1,\dots,N_h$ and using the
uniformly bounded overlap of the supports of the basis functions, we obtain
\[
\|(F_h'(U)-F_h'(V))Z\|_2
\le
C_\blacktriangle h^{\frac d2-1}
\Bigl(
2C_\blacktriangle h^{\frac d2-1}M_1(B)
+
M_2(B)\bigl(\|\nabla u_{g,h}\|_{L^2(\Omega)}+C_\blacktriangle h^{\frac d2-1}R_B\bigr)
\Bigr)
\|U-V\|_2\|Z\|_2 .
\]
Hence
\[
\opnorm{F_h'(U)-F_h'(V)}
\le
L_B\|U-V\|_2,
\]
where
\[
L_B
=
2C_\blacktriangle^2 h^{d-2}M_1(B)
+
C_\blacktriangle h^{\frac d2-1}M_2(B)
\bigl(\|\nabla u_{g,h}\|_{L^2(\Omega)}+C_\blacktriangle h^{\frac d2-1}R_B\bigr).
\]
Thus $F_h'$ is locally Lipschitz, and in particular continuous.
Combined with the differentiability established above,
this implies that $F_h \in C^1(\mathbb{R}^{N_h};\mathbb{R}^{N_h})$.
\end{proof}

Next, to solve the nonlinear finite element system $F_h(U) = 0$, for a given initial vector $U^{(0)}\in\mathbb{R}^{N_h}$, we consider the Newton iteration
\begin{equation}\label{eq:Newton-iteration}
    U^{(k+1)} = U^{(k)} - \bigl(F_h'(U^{(k)})\bigr)^{-1}F_h(U^{(k)}), \qquad k\ge 0,
\end{equation}
whenever $F_h'(U^{(k)})$ is invertible. The associated finite element iterates corresponding to the nodal values $\{U_j^{(k)}\}_{j=1}^{N_h}$ are
\begin{equation}\label{eq:FEM-iteration}
    u_h^{(k)} := u_{g,h}+\sum_{j=1}^{N_h} U_j^{(k)} \varphi_j .
\end{equation}

\begin{thm}[Newton--Galerkin convergence]\label{thm:newton-galerkin}
Under the hypotheses of Theorem \ref{thm:optimal-rates}, fix $h>0$ and let $U^{(0)}\in\mathbb R^{N_h}$ such that $F_h'(U^{(0)})$ is invertible. Assume that there exists $r_h>0$ such that
\begin{enumerate}
\item[(i)] $\|F_h'(U^{(0)})^{-1}F_h(U^{(0)})\|_2 \le r_h/2$,
\item[(ii)] for all $U,V\in\overline{B}(U^{(0)},r_h)$, the Jacobian satisfies the Lipschitz condition
\[
\|F_h'(U^{(0)})^{-1}(F_h'(U)-F_h'(V))\|_{\mathcal L(\mathbb R^{N_h})}
\le
\frac{1}{r_h}\|U-V\|_2 .
\]
\end{enumerate}
Then there exists a unique vector $U_h^\ast\in\overline{B}(U^{(0)},r_h)$
such that $F_h(U_h^\ast)=0$, and the Newton iterates \eqref{eq:Newton-iteration} are well-defined and stay in the ball $\overline{B}(U^{(0)},r_h)$, converging to $U_h^\ast$ such that 
\[
\|U^{(k)}-U_h^\ast\|_2 \le \frac{r_h}{2^k}, \qquad k\ge0.
\]
Moreover, if $k=k_h$ is chosen such that
\begin{equation}\label{eq:NK_stopping_criterion}
\frac{r_h}{2^{k_h}}\le\, h^{2-\frac d2},
\end{equation}
then the optimal finite element estimates hold:
\[
\|u-u_h^{(k_h)}\|_{H^1(\Omega)}\le C_1 h,
\qquad
\|u-u_h^{(k_h)}\|_{L^2(\Omega)}\le C_2 h^2\,,
\]
where $u$ denotes the exact unique weak solution of \eqref{eq:weak_problem}. This shows that the algebraic error introduced by the Newton iteration
does not deteriorate the optimal finite element convergence rates,
provided a sufficient number of iterations is performed.
\end{thm}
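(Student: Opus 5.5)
The plan has two parts. First, conditions (i)--(ii) are exactly the affine-invariant Newton--Kantorovich hypotheses, so the classical Kantorovich theorem applies directly. Second, the algebraic error is transferred to the finite element error through a nodal-to-function norm bound and Theorem~\ref{thm:optimal-rates}.

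\textbf{Step 1 (the Newton part).} With $\beta:=\|F_h'(U^{(0)})^{-1}F_h(U^{(0)})\|_2\le r_h/2$ and affine Lipschitz constant $\omega=1/r_h$, the Kantorovich quantity is $\alpha=\omega\beta\le 1/2$. Since $F_h\in C^1$ by Lemma~\ref{lem:Fhprime-lipschitz}, the affine-invariant Newton--Kantorovich theorem (Deuflhard's version) gives the following. The iterates are well defined: invertibility of $F_h'(U^{(k)})$ follows from the Banach perturbation lemma, because $\|F_h'(U^{(0)})^{-1}(F_h'(U^{(k)})-F_h'(U^{(0)}))\|\le \|U^{(k)}-U^{(0)}\|_2/r_h<1$ inside the open ball. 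The iterates remain in $\overline B(U^{(0)},\rho_-)\subset \overline B(U^{(0)},r_h)$, where $\rho_-=(1-\sqrt{1-2\alpha})/\omega\le r_h$. They converge to a zero $U_h^\ast$, which is unique in $\overline B(U^{(0)},r_h)$ (one has $\rho_+\ge r_h$, with equality in the borderline case). Finally, the standard majorant sequence bound gives $\|U^{(k)}-U_h^\ast\|_2\le 2^{-k}(2\alpha)^{2^k}/\omega\le r_h 2^{-k}$.

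I expect the main obstacle to be the boundary case $\alpha=1/2$. There convergence is only linear, and uniqueness holds only on the closed ball of radius $\rho_+=r_h$, so the constants must be tracked carefully. Here I would cite the theorem rather than rederive the majorant argument.

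\textbf{Step 2 (transfer to functions).} The function $u_h^\ast:=u_{g,h}+\sum_j (U_h^\ast)_j\varphi_j$ is a discrete Galerkin solution, since $F_h(U_h^\ast)=0$. Theorem~\ref{thm:optimal-rates} applies to every discrete solution, so for $h<h_0$ we have $\|u-u_h^\ast\|_{H^1}\le Ch$ and $\|u-u_h^\ast\|_{L^2}\le Ch^2$. The difference $\delta:=u_h^{(k_h)}-u_h^\ast=\delta u_h(U^{(k_h)}-U_h^\ast)\in V_h$ is then bounded as follows. Lemma~\ref{lem:grad-nodal-bound} gives $\|\nabla\delta\|_{L^2}\le C_\blacktriangle h^{d/2-1}\|U^{(k_h)}-U_h^\ast\|_2$. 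Mass-matrix scaling on shape-regular meshes (elementwise $\|v\|_{L^2(T)}^2\lesssim |T|\sum_{\text{vertices}} v_i^2$) gives $\|\delta\|_{L^2}\le Ch^{d/2}\|U^{(k_h)}-U_h^\ast\|_2$. Inserting the stopping criterion \eqref{eq:NK_stopping_criterion}, $\|U^{(k_h)}-U_h^\ast\|_2\le h^{2-d/2}$, yields
\[
\|\nabla\delta\|_{L^2(\Omega)}\le C_\blacktriangle h,\qquad \|\delta\|_{L^2(\Omega)}\le C h^{2}.
\]
Poincar\'e's inequality on $V$ turns the first bound into an $H^1$ bound of order $h$.

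\textbf{Step 3 (conclusion).} The triangle inequality $\|u-u_h^{(k_h)}\|\le\|u-u_h^\ast\|+\|\delta\|$ in the $H^1$ and $L^2$ norms gives the two claimed rates, with $C_1,C_2$ depending only on the constants of Theorem~\ref{thm:optimal-rates}, on $\sigma$, $d$ and $\Omega$. The one point to flag is that the $L^2$ bound needs the stated stopping rule to be of order $h^{2-d/2}$; the $H^1$ bound alone would only need $h^{2-d/2}$ as well. Both rates hold for $h<h_0$.
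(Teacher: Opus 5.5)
Your proposal is correct and follows the paper's proof. The paper cites the one-constant Newton--Kantorovich theorem of Ciarlet--Mardare, which gives existence, uniqueness in $\overline{B}(U^{(0)},r_h)$ and the bound $r_h/2^k$ directly; your reduction from the classical majorant bound $2^{-k}(2\alpha)^{2^k}/\omega\le r_h2^{-k}$ is equivalent, and the paper then combines both estimates of Lemma~\ref{lem:grad-nodal-bound} (your mass-matrix scaling is its first estimate), Poincar\'e's inequality, the stopping rule and Theorem~\ref{thm:optimal-rates} through the triangle inequality exactly as you do. Your remark that the final rates only hold for $h<h_0$, inherited from Theorem~\ref{thm:optimal-rates}, is a fair precision that the statement's ``fix $h>0$'' glosses over.
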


\begin{proof}
By Lemma~\ref{lem:Fhprime-lipschitz}, the mapping $F_h$ is of class $C^1$.
Hence the one-constant Newton--Kantorovich theorem \cite[Theorem 5]{CiarletMardare12}
applies, yielding the existence of a unique zero
$U_h^\ast\in\overline{B}(U^{(0)},r_h)$ and the estimate
\begin{equation}\label{eq:Newton-estimates}
    \|U^{(k)}-U_h^\ast\|_2 \le \frac{r_h}{2^k},\qquad k\ge 0.
\end{equation}
In particular, all iterates $U^{(k)}$ remain in $\overline{B}(U^{(0)},r_h)$,
so the assumptions are valid at each step. The relation $F_h(U_h^\ast)=0$ implies that the corresponding function
$u_h^\ast$ is a discrete solution of \eqref{eq:weak_problem_discrete}. Set $W^{(k)}:=U^{(k)}-U_h^\ast$ and observe that
\[
u_h^{(k)}-u_h^\ast
=
\sum_{j=1}^{N_h} W_j^{(k)}\varphi_j .
\]
By Lemma \ref{lem:grad-nodal-bound} and Poincar\'e, there exist two constants $\overline{C}_1, \overline{C}_2>0$,
independent of $h$ and $k$, such that
\[
    \|u_h^{(k)}-u_h^\ast\|_{H^1(\Omega)} \le C_P \|\nabla (u_h^{(k)}-u_h^\ast) \|_{L^2(\Omega)} \le \overline{C}_1 h^{\frac d2-1}\|W^{(k)}\|_2, \qquad
    \|u_h^{(k)}-u_h^\ast\|_{L^2(\Omega)}
    \le
    \overline{C}_2 h^{\frac d2}\|W^{(k)}\|_2.
\]
Combining this with the Newton estimate \eqref{eq:Newton-estimates} gives
\[
\|u_h^{(k)}-u_h^\ast\|_{H^1(\Omega)}
\le
\overline{C}_1 h^{\frac d2-1}\frac{r_h}{2^k},
\qquad
\|u_h^{(k)}-u_h^\ast\|_{L^2(\Omega)}
\le
\overline{C}_2 h^{\frac d2}\frac{r_h}{2^k}.
\]
On the other hand, since $u_h^\ast$ is a discrete solution, Theorem~\ref{thm:optimal-rates} implies that there exists $\widetilde{C}_1, \widetilde{C}_2 > 0$
\[
\|u-u_h^\ast\|_{H^1(\Omega)}\le \widetilde{C}_1 h,
\qquad
\|u-u_h^\ast\|_{L^2(\Omega)}\le \widetilde{C}_2 h^2.
\]
Therefore, by the triangle inequality,
\[
\|u-u_h^{(k)}\|_{H^1(\Omega)}
\le
\|u-u_h^\ast\|_{H^1(\Omega)}
+
\|u_h^\ast-u_h^{(k)}\|_{H^1(\Omega)}
\le
\widetilde{C}_1 h + \overline{C}_1 h^{\frac d2-1}\frac{r_h}{2^k},
\]
and similarly,
\[
\|u-u_h^{(k)}\|_{L^2(\Omega)}
\le
\|u-u_h^\ast\|_{L^2(\Omega)}
+
\|u_h^\ast-u_h^{(k)}\|_{L^2(\Omega)}
\le
\widetilde{C}_2 h^2 + \overline{C}_2 h^{\frac d2}\frac{r_h}{2^k}.
\]
Finally, if $k=k_h$ is chosen so that
\[
\frac{r_h}{2^{k_h}}\le \,h^{2-\frac d2},
\]
then there exist constants $C_1,C_2>0$ such that
\[
\|u-u_h^{(k_h)}\|_{H^1(\Omega)}\le C_1 h,
\qquad
\|u-u_h^{(k_h)}\|_{L^2(\Omega)}\le C_2 h^2.
\]
\end{proof}
\begin{rem}\label{rem:practical_NK}
The explicit Lipschitz bound from Lemma~\ref{lem:Fhprime-lipschitz}
allows the hypotheses of Theorem~\ref{thm:newton-galerkin} to be
verified in practice. For an initial guess $U^{(0)}$ such that $F_h'(U^{(0)})$ is invertible, let
\[
\delta U^{(0)}:=-\bigl(F_h'(U^{(0)})\bigr)^{-1}F_h(U^{(0)}),
\qquad
\beta_h:=\|\delta U^{(0)}\|_2,
\]
and define the candidate radius $r_h := 2\beta_h$. Then assumption~(i) of Theorem~\ref{thm:newton-galerkin} is automatically satisfied. For the ball $B:=B(U^{(0)},r_h)$, we have
\[
\|U\|_2 \le R_B := \|U^{(0)}\|_2 + r_h,
\qquad \forall U\in B.
\]
Hence, Lemma~\ref{lem:Fhprime-lipschitz} yields the Lipschitz bound
\[
L_B
=
2C_\blacktriangle^2 h^{d-2}M_1(B)
+
C_\blacktriangle h^{\frac d2-1}M_2(B)
\bigl(\|\nabla u_{g,h}\|_{L^2(\Omega)}+C_\blacktriangle h^{\frac d2-1}R_B\bigr)\,,
\]
where $M_1(B)$ and $M_2(B)$ are evaluated on any interval containing the range of $u_h(W)$ for $W\in B$. Since
\[
\|u_h(W)\|_{L^\infty(\Omega)}
\le
\|u_{g,h}\|_{L^\infty(\Omega)}+\|W\|_2
\le
\|u_{g,h}\|_{L^\infty(\Omega)}+R_B
=
\|g\|_{L^\infty(\Gamma_D)} + R_B\,,
\]
for $u_{g, h}$ chosen as in Remark~\ref{rem:lifting}, it suffices to take
\[
M_1(B):=
\sup_{\substack{x\in\Omega\\ |s|\le \|g\|_{L^\infty(\Gamma_D)} + R_B}}
\|\K_u(x,s)\|,
\qquad
M_2(B):=
\sup_{\substack{x\in\Omega\\ |s|\le \|g\|_{L^\infty(\Gamma_D)} + R_B}}
\|\K_{uu}(x,s)\|.
\]
Assumption~(ii) of Theorem~\ref{thm:newton-galerkin} is then ensured if
\begin{equation}\label{eq:practical-Lpishitz-bound}
\bigl\|\bigl(F_h'(U^{(0)})\bigr)^{-1}\bigr\|_{\mathcal L(\mathbb R^{N_h})}
\, L_B
=
\frac{L_B}{\sigma_{\min}(F_h'(U^{(0)}))}
\le \frac{1}{r_h}\,, 
\end{equation}
where $\sigma_{\min}(F_h'(U^{(0)})) > 0$ is the smallest singular value of the invertible matrix $F_h'(U^{(0)})$.












\end{rem}

\clearpage

\section{Numerical results}

The purpose of this section is to demonstrate that Theorem~\ref{thm:newton-galerkin} is algorithmically applicable through the practical criterion established in Remark~\ref{rem:practical_NK}. Through representative two- and three-dimensional examples, we show that its hypotheses can be verified a posteriori from computable quantities associated with the discrete problem, thereby providing a practical certificate for the convergence of Newton's method. We then verify numerically the optimal finite element error estimates guaranteed by the theorem.

\subsection{Examples}

We present two representative numerical examples illustrating the performance of the proposed method. The first example is set in a two-dimensional doubly connected domain, while the second example extends the framework to a three-dimensional setting, see Figures~\ref{fig:mesh_2D} and~\ref{fig:mesh_3D}, respectively.

\begin{figure}[H]
\centering
\subfigure[Example $1$.\label{fig:mesh_2D}]{%
    \includegraphics[width=0.45\textwidth]{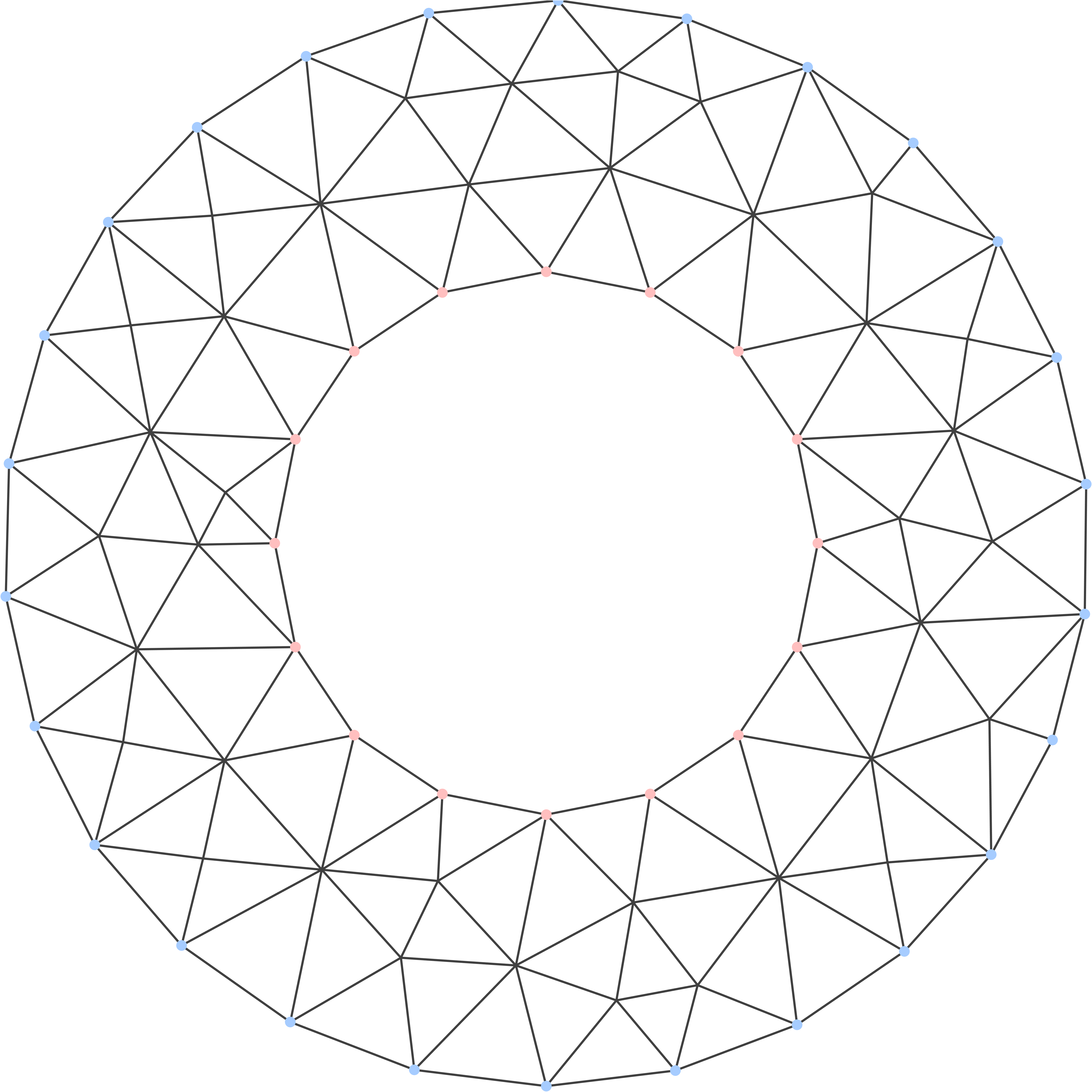}%
}
\hfill
\subfigure[Example $2$.\label{fig:mesh_3D}]{%
    \includegraphics[width=0.45\textwidth]{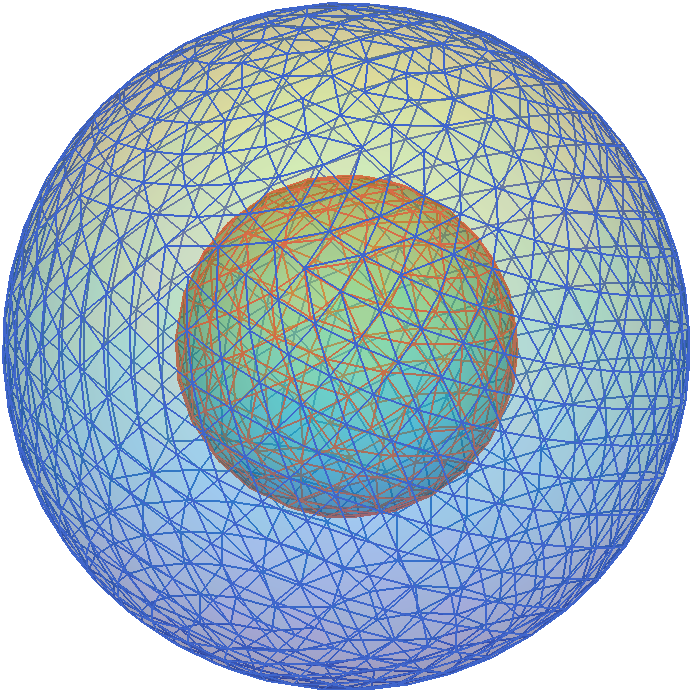}%
}
\caption{Finite element meshes for the computational domains used in the numerical experiments.}
\label{fig:meshes}
\end{figure}

\subsubsection*{Example 1 (2D case)}
Consider a solid occupying the two-dimensional annular domain $\Omega \coloneqq \mathrm{B}(\mathrm{O},r_\mathrm{out}) \setminus \overline{\mathrm{B}(\mathrm{O},r_\mathrm{int})}$, where $\mathrm{O}(0,0)$ is the origin of the Cartesian coordinate system, $r_\mathrm{int} = 0.5$ and $r_\mathrm{out} = 1$ are the inner and outer radii of $\Omega$, respectively, characterised by the nonlinear anisotropic nonsymmetric heat conduction tensor given by
\begin{subequations} \label{eq:ex1a}
\begin{equation}\label{eq:K-example-2d} \K(x_1,x_2,u)= \begin{pmatrix} 2+(x_1-0.4)^2+0.45\tanh(u) & 0.35x_1x_2+0.30\sin(u) \\[2mm] 0.05x_1x_2+0.22\arctan(u) & 1.8+(x_2-0.9)^2+0.40\dfrac{u}{\sqrt{1+u^2}} \end{pmatrix}. 
\end{equation}
Neumann boundary conditions are imposed on the inner boundary $\Gamma_N \coloneqq \{(x_1,x_2)\in\mathbb{R}^2 : \sqrt{x_1^2 + x_2^2} = r_\mathrm{int}\}$, while Dirichlet boundary conditions are prescribed on the outer boundary $\Gamma_D \coloneqq \{(x_1,x_2)\in\mathbb{R}^2 : \sqrt{x_1^2 + x_2^2} = r_\mathrm{out}\}$, such that $\Gamma_D \cup \Gamma_N = \partial \Omega$ and $\overline{\Gamma}_D \cap \overline{\Gamma}_N = \varnothing$, with the corresponding manufactured analytical solution for the temperature given by
\begin{align} \label{eq:ex1b}
u^{(\mathrm{ex})}(x_1,x_2) = \cos{x_1} \cosh{x_2} + \sin{x_1} \sinh{x_2},
\quad (x_1,x_2) \in \overline{\Omega}.
\end{align}
\end{subequations}

\subsubsection*{Example 2 (3D case)}
Consider now a solid occupying the three-dimensional spherical shell domain
$\Omega \coloneqq \mathrm{B}(\mathrm{O},r_\mathrm{out}) \setminus \overline{\mathrm{B}(\mathrm{O},r_\mathrm{int})}$,
where $r_\mathrm{int} = 0.5$ and $r_\mathrm{out} = 1$ are the inner and outer radii of $\Omega$, respectively,
characterised by the nonlinear anisotropic heat conduction tensor given by
\begin{subequations} \label{eq:ex2}
\begin{equation} \label{eq:ex2a}
\K(x_1,x_2,x_3, u) =
(1 + 0.5 \tanh(u))
\begin{bmatrix}
(x_1 - 0.4)^2 + 2 & 2 x_1 x_2 & 2 x_1 x_3 \\
2 x_1 x_2 & (x_2 - 0.9)^2 + 1 & 2 x_2 x_3 \\
2 x_1 x_3 & 2 x_2 x_3 & (x_3 - 0.7)^2 + 1
\end{bmatrix},
\quad (x_1,x_2,x_3) \in \overline{\Omega}.
\end{equation}
Similarly to Example 1, Neumann boundary conditions are imposed on the inner boundary,
whilst Dirichlet boundary conditions are prescribed on the outer boundary, whereas the corresponding manufactured analytical solution for the temperature is given by
\begin{align} \label{eq:ex2b}
u^{(\mathrm{ex})}(x_1,x_2,x_3)
= \cos(x_1)\cosh(x_3) + \sin(x_2)\sinh(x_3) + \cos(x_2)\sinh(x_1),
\quad (x_1,x_2,x_3) \in \overline{\Omega}.
\end{align}
\end{subequations}

In both examples, the source term $f$ and the boundary data $g$ and $h$ are chosen consistently with the manufactured solution $u^{(\mathrm{ex})}$, by evaluating the differential operator and the prescribed boundary operators in the governing problem. This ensures that $u^{(\mathrm{ex})}$ is the exact solution of the boundary value problem and enables an exact assessment of the finite element discretization errors.

\subsection{A posteriori Newton--Kantorovich condition}

A crucial component in the practical realization of the Newton--Galerkin scheme is the construction of a reliable initial approximation for the nonlinear discrete problem. To this end, we first solve a frozen-coefficient problem, in which the nonlinear diffusion tensor $\K(x,u)$ is evaluated at the reference state $u=0$. This leads to the linear elliptic boundary value problem
\begin{equation}\label{eq:frozen-coeff}
\left\{
\begin{aligned}
-\nabla\cdot\bigl(\K(x, 0)\nabla u\bigr) &= f && \text{in }\Omega,\\
u &= g && \text{on }\Gamma_D,\\
\nu\cdot\bigl(\K(x,0)\nabla u\bigr) &= h && \text{on }\Gamma_N.
\end{aligned}
\right.
\end{equation}

Tables~\ref{tab:ex1} and~\ref{tab:ex2} summarize the practical verification of the Newton--Kantorovich criterion for Examples~1 and~2, respectively. Starting from the finite element solution of the frozen-coefficient problem, \(m\) preliminary Newton iterations are performed, and the first iterate for which the computable condition~\eqref{eq:practical-Lpishitz-bound} is satisfied is designated by \(U^{(0)}\). Thus, \(U^{(0)}\) is not the frozen-coefficient solution, but the first Newton iterate satisfying the certification condition. For each mesh size, the tables report the corresponding certification quantities, including \(r_h\), the radius of the ball centred at \(U^{(0)}\) within which the associated discrete solution is guaranteed to be unique. In particular, the inequality~\eqref{eq:practical-Lpishitz-bound} 
\[
    \frac{L_B}{\sigma_{\min}(F_h'(U^{(0)}))}\leq\frac{1}{r_h}
\]
is satisfied at every discretization level considered, thereby providing an \emph{a posteriori} verification of assumption~(ii) of Theorem~\ref{thm:newton-galerkin}; see Remark~\ref{rem:practical_NK}.

\begin{table}[H]
\centering
\begin{tabular}{c c c c c c c}
\toprule
$h$ & $m$ & $r_h$ & $\sigma_{\min}(F_h'(U^{(0)}))$ & $L_B$ & $L_B/\sigma_{\min}(F_h'(U^{(0)}))$ & $1/r_h$ \\
\midrule
0.11 & 2 & $4.16\times 10^{-9}$ & $1.51\times 10^{-1}$ & $2.13\times 10^{4}$ & $1.41\times 10^{5}$ & $2.41\times 10^{8}$ \\
0.06 & 2 & $7.95\times 10^{-9}$ & $4.20\times 10^{-2}$ & $3.77\times 10^{4}$ & $8.97\times 10^{5}$ & $1.26\times 10^{8}$ \\
0.03 & 2 & $1.37\times 10^{-8}$ & $1.25\times 10^{-2}$ & $9.37\times 10^{4}$ & $7.48\times 10^{6}$ & $7.31\times 10^{7}$ \\
\bottomrule
\end{tabular}
\caption{Values of the mesh size $h$, the number of preliminary Newton iterations $m$, the corresponding Newton radius $r_h=2\|U^{(1)}-U^{(0)}\|_2$, the smallest singular value of the Jacobian matrix $\sigma_{\min}(F_h'(U^{(0)}))$, the Lipschitz bound $L_B$, and the ratios $L_B/\sigma_{\min}(F_h'(U^{(0)}))$ and $1/r_h$, for Example $1$.}
\label{tab:ex1}
\end{table}

\begin{table}[H]
\centering
\small
\begin{tabular}{c c c c c c c}
\toprule
$h$ & $m$ & $r_h$ & $\sigma_{\min}(F_h'(U^{(0)}))$ & $L_B$ & $L_B/\sigma_{\min}(F_h'(U^{(0)}))$ & $1/r_h$ \\
\midrule
0.32 & 3 & $2.58\times 10^{-14}$ & $4.76\times 10^{-2}$ & $5.86\times 10^{5}$ & $1.23\times 10^{7}$ & $3.88\times 10^{13}$ \\
0.16 & 3 & $1.62\times 10^{-13}$ & $1.20\times 10^{-2}$ & $1.89\times 10^{5}$ & $1.57\times 10^{7}$ & $6.17\times 10^{12}$ \\
0.08 & 3 & $4.43\times 10^{-13}$ & $3.02\times 10^{-3}$ & $8.91\times 10^{4}$ & $2.95\times 10^{7}$ & $2.26\times 10^{12}$ \\
\bottomrule
\end{tabular}
\caption{Values of the mesh size $h$, the number of preliminary Newton iterations $m$, the corresponding Newton radius $r_h=2\|U^{(1)}-U^{(0)}\|_2$, the smallest singular value of the Jacobian matrix $\sigma_{\min}(F_h'(U^{(0)}))$, the Lipschitz bound $L_B$, and the ratios $L_B/\sigma_{\min}(F_h'(U^{(0)}))$ and $1/r_h$, for Example $2$.}
\label{tab:ex2}
\end{table}

\subsection{Optimal error estimates}

To investigate the convergence of the finite element method, we analyse the \(H^1(\Omega)\)- and \(L^2(\Omega)\)-norms of the error in the numerical approximation, $\|u-u_h^{(k_h)}\|_{H^1(\Omega)}$ and $\|u-u_h^{(k_h)}\|_{L^2(\Omega)}$, respectively, for three different meshes in each of the two examples considered. Here \(u\) denotes the exact weak solution of \eqref{eq:weak_problem}, while $u_h^{(k_h)} = u_{g, h} + \sum_{j=1}^{N_h} U_j^{(k_h)} \varphi_j$ is the finite element iterate corresponding to the nodal vector \(\{U_j^{(k_h)}\}_{j=1}^{N_h}\), as defined in \eqref{eq:FEM-iteration}. For each mesh, the number \(k_h\) of Newton iterations is chosen so that the stopping condition \eqref{eq:NK_stopping_criterion} is satisfied. This condition is precisely the requirement used in the proof of the optimal error estimates: it ensures that the algebraic error produced by terminating the nonlinear solver is of higher order than the finite element discretization error. Thus, the computed approximation \(u_h^{(k_h)}\) is sufficiently close to the corresponding nonlinear Galerkin solution for the asymptotic finite element rates to be observed.

Figures~\ref{fig:convergence_2D} and~\ref{fig:convergence_3D} display, on a logarithmic scale, the two finite element errors $\|u-u_h^{(k_h)}\|_{H^1(\Omega)}$ and $\|u-u_h^{(k_h)}\|_{L^2(\Omega)}$, for Examples~1 and~2, respectively, as functions of the mesh size $h$, together with the reference functions $h$ and $h^2$. It can be seen from these figures that the corresponding curves are parallel, indicating first-order convergence in the $H^1(\Omega)$-norm and second-order convergence in the $L^2(\Omega)$-norm. This is consistent with the finite element error estimates established in Theorem~\ref{thm:newton-galerkin}. In particular, the theoretical convergence orders are recovered using the stopping strategy~\eqref{eq:NK_stopping_criterion}, showing that the algebraic error does not influence the asymptotic finite element convergence.

\begin{figure}[H]
\centering
\subfigure[Example~$1$.\label{fig:convergence_2D}]{%
    \includegraphics[width=0.45\textwidth]{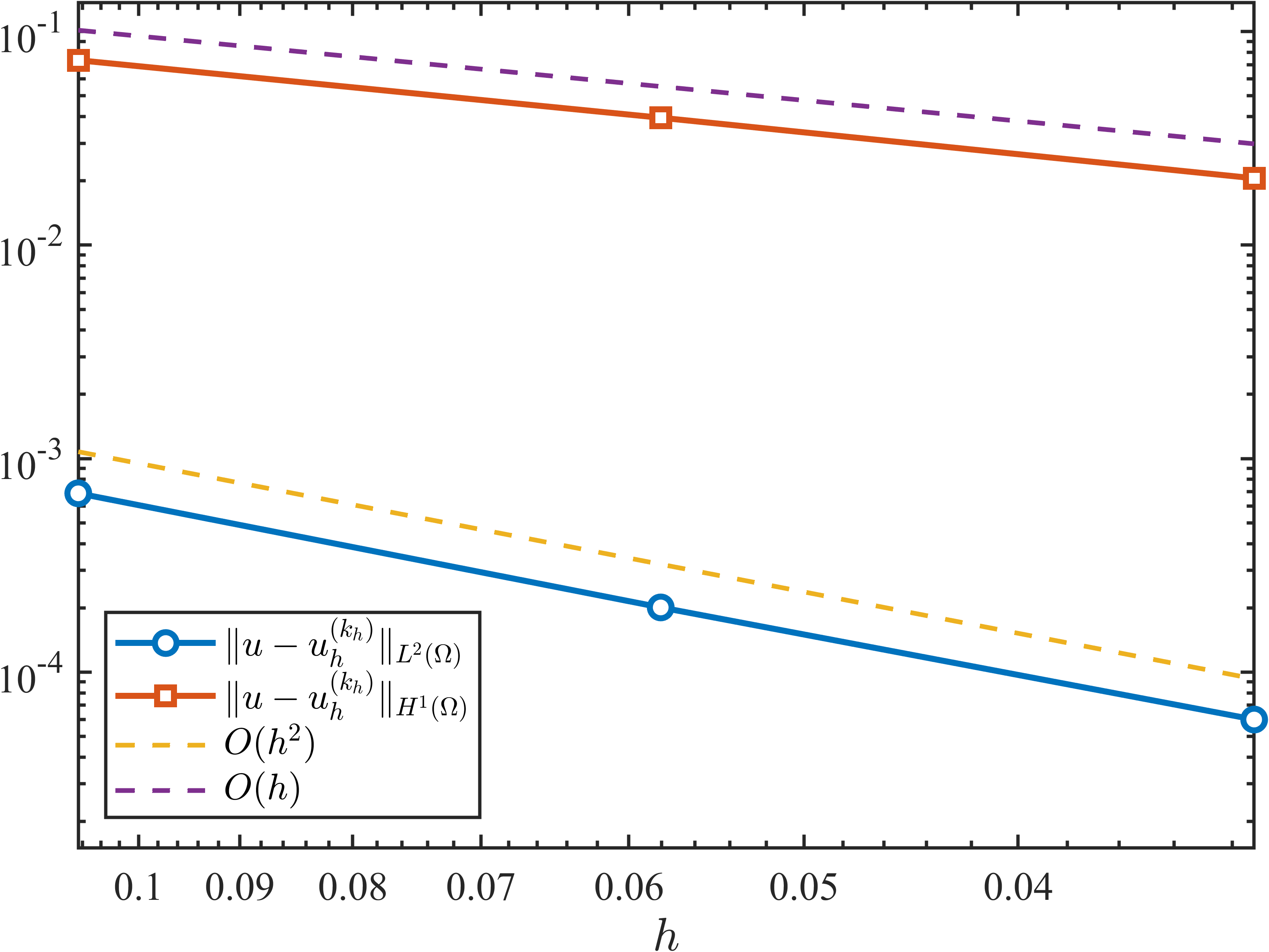}%
}
\hfill
\subfigure[Example~$2$.\label{fig:convergence_3D}]{%
    \includegraphics[width=0.45\textwidth]{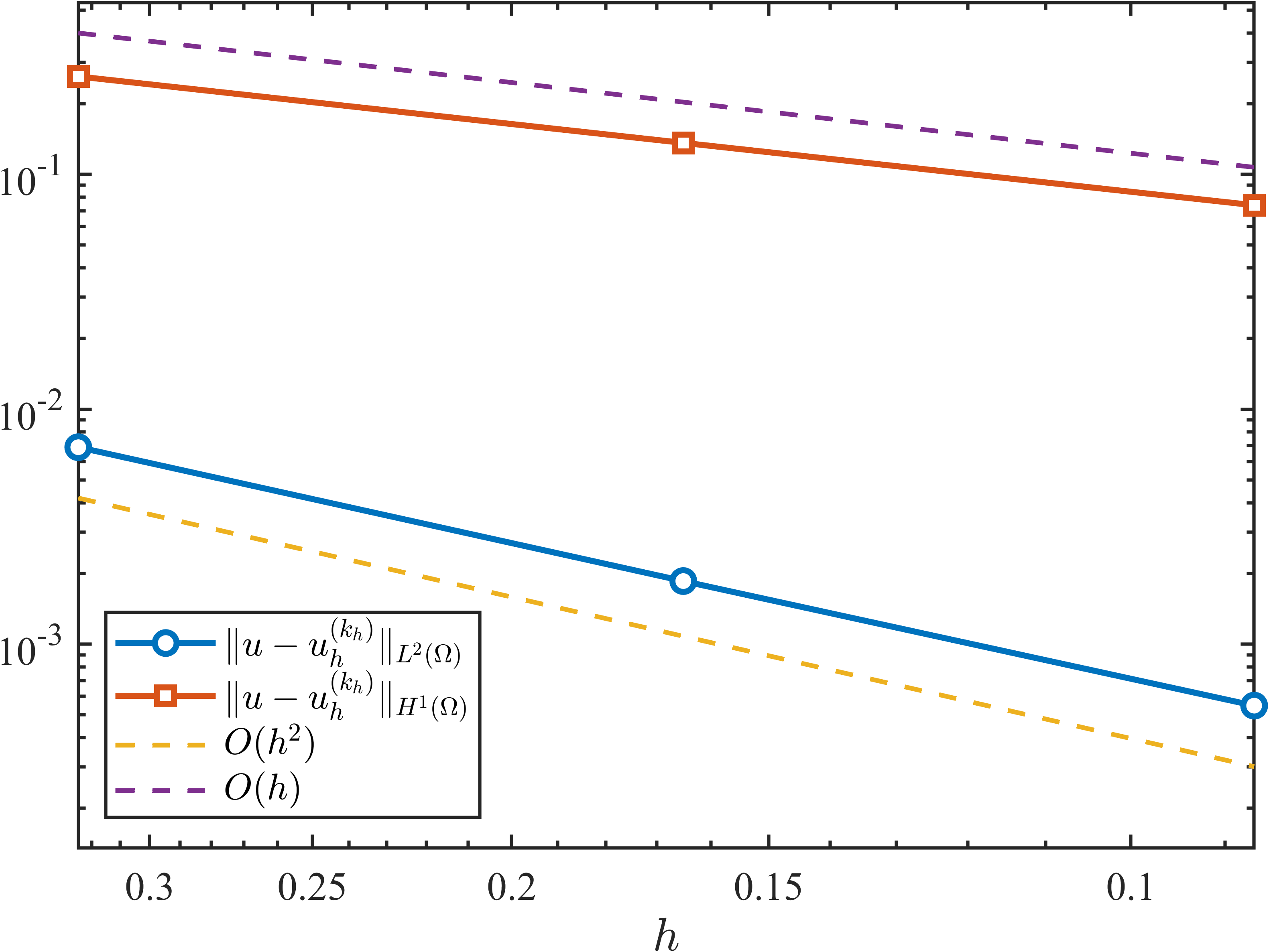}%
}
\caption{The finite element errors
$\|u-u_h^{(k_h)}\|_{H^1(\Omega)}$ and
$\|u-u_h^{(k_h)}\|_{L^2(\Omega)}$, together with the reference functions
$h$ and $h^2$, plotted on a log--log scale.}
\label{fig:error_rates}
\end{figure}

Finally, to assess the influence of the temperature dependence of the
conductivity tensor, we introduce
\begin{equation}\label{eq:cond_variation}
\delta_{\K}(x)
\coloneqq
\frac{
\left\|
\K\bigl(x,u_h^{(k_h)}(x)\bigr)-\K(x,0)
\right\|_{\mathrm F}
}{
\left\|\K(x,0)\right\|_{\mathrm F}
},
\qquad x\in\Omega,
\end{equation}
where $\|\cdot\|_{\mathrm F}$ denotes the Frobenius norm. Thus,
$\delta_{\K}(x)$ measures the relative departure of the conductivity
tensor evaluated at the computed temperature from its frozen value at
$u=0$.

\begin{figure}[H]
\centering

\subfigure[Example~$1$: $|u-u_h^{(k_h)}|$.\label{fig:error_ex1}]{%
    \includegraphics[width=0.47\textwidth]{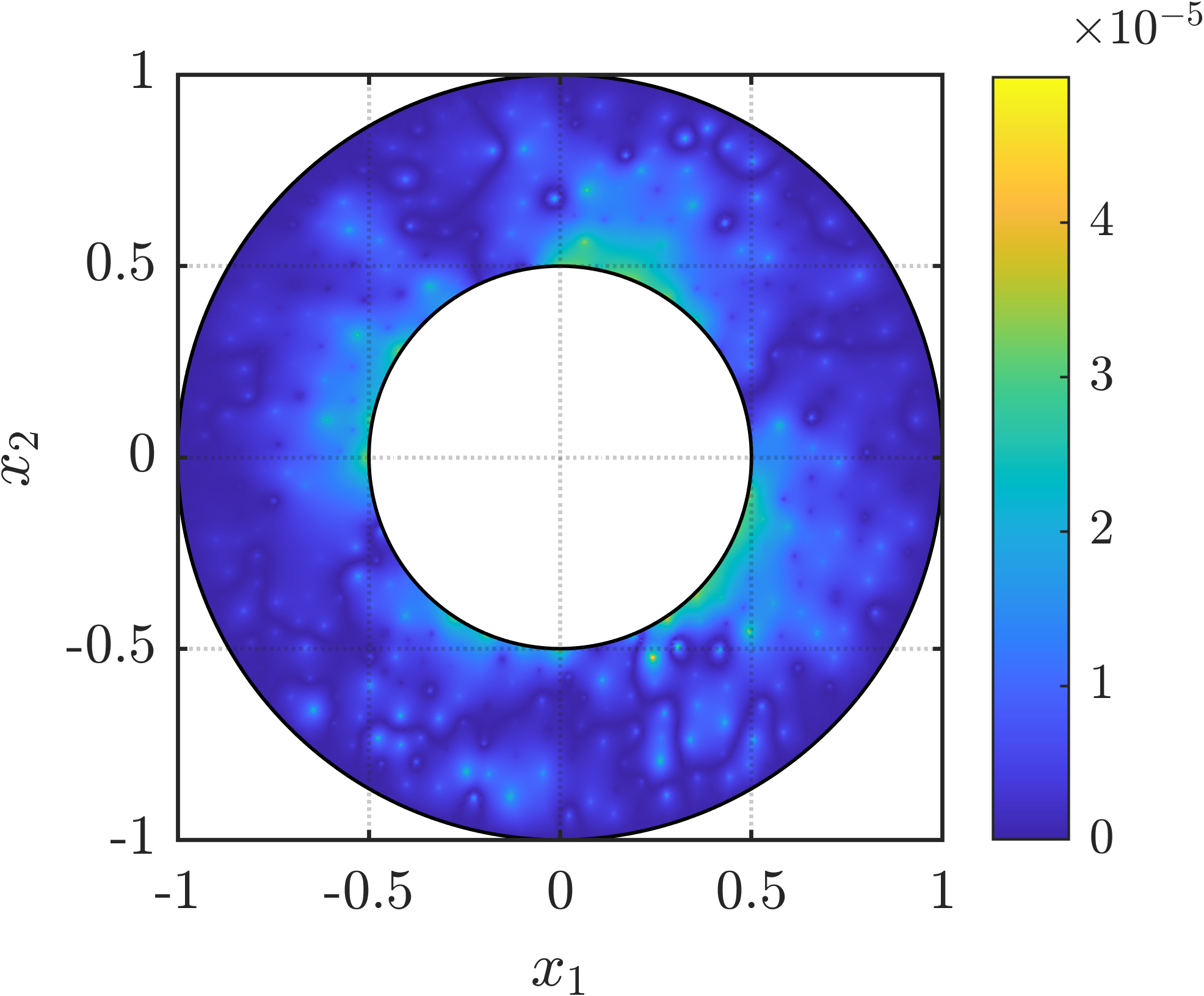}%
}
\hfill
\subfigure[Example~$1$: $\delta_{\K}$.\label{fig:conductivity_ex1}]{%
    \includegraphics[width=0.47\textwidth]{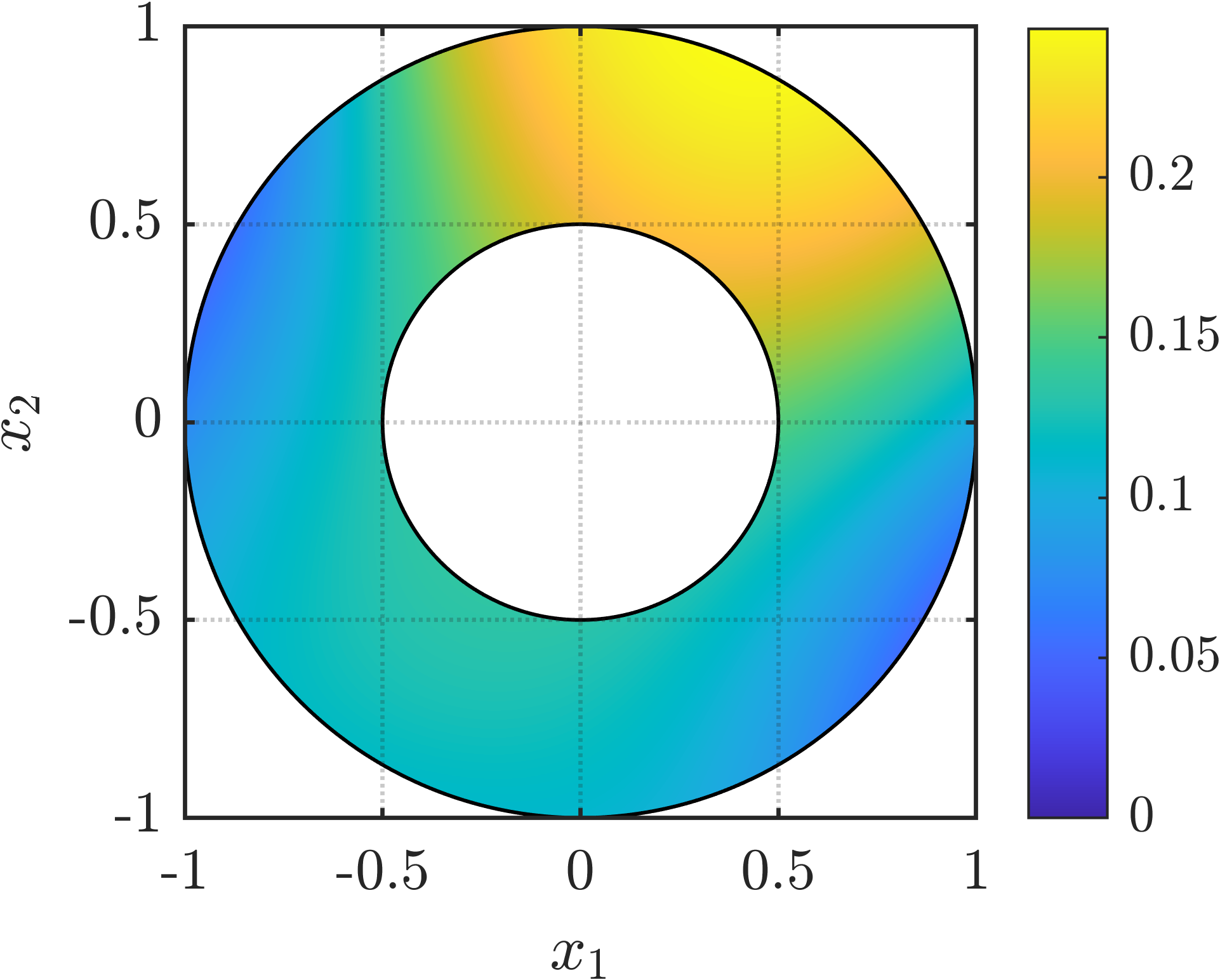}%
}

\vspace{0.5em}

\subfigure[Example~$2$: $|u-u_h^{(k_h)}|$.\label{fig:error_ex2}]{%
    \includegraphics[width=0.47\textwidth]{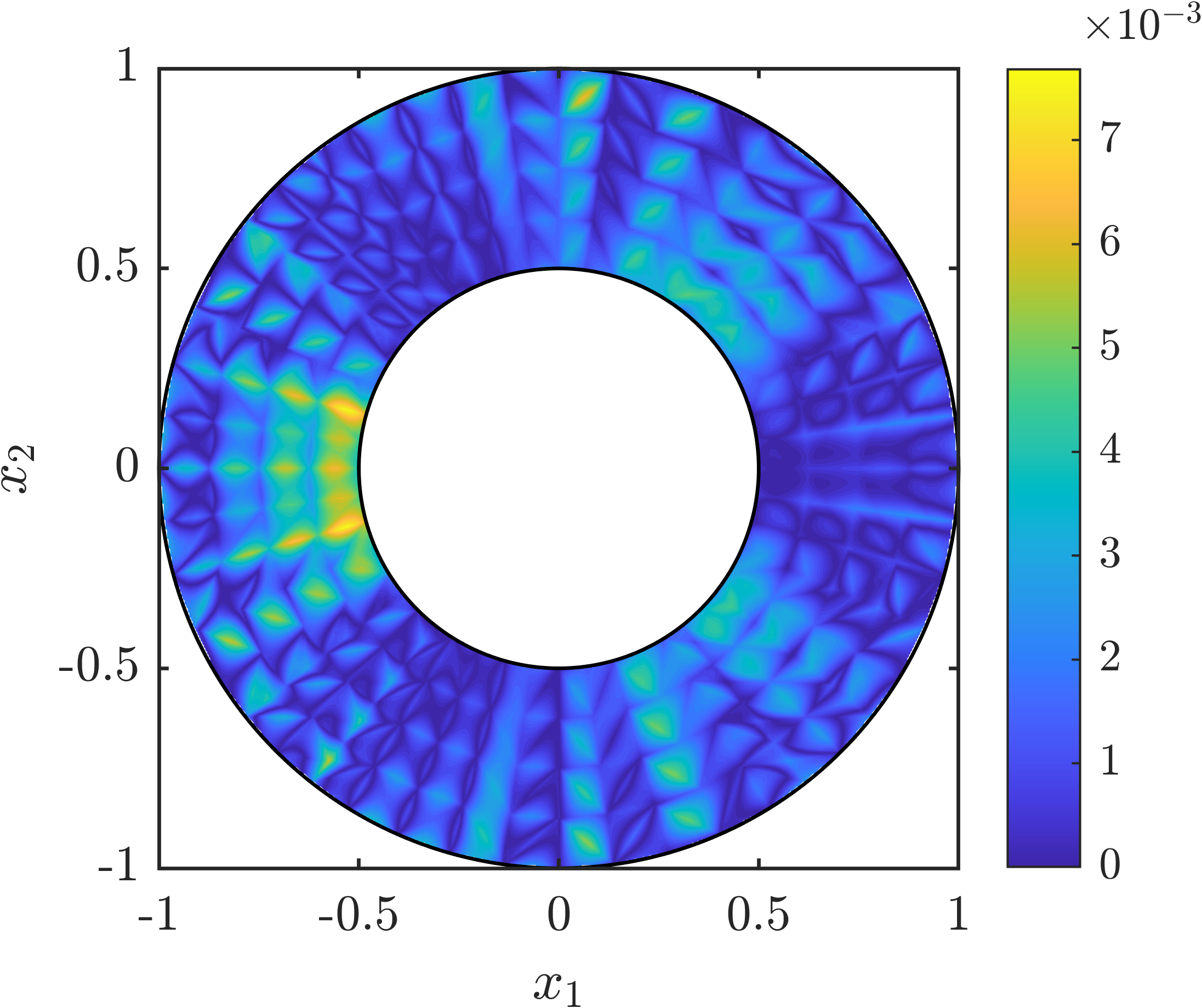}%
}
\hfill
\subfigure[Example~$2$: $\delta_{\K}$.\label{fig:conductivity_ex2}]{%
    \includegraphics[width=0.47\textwidth]{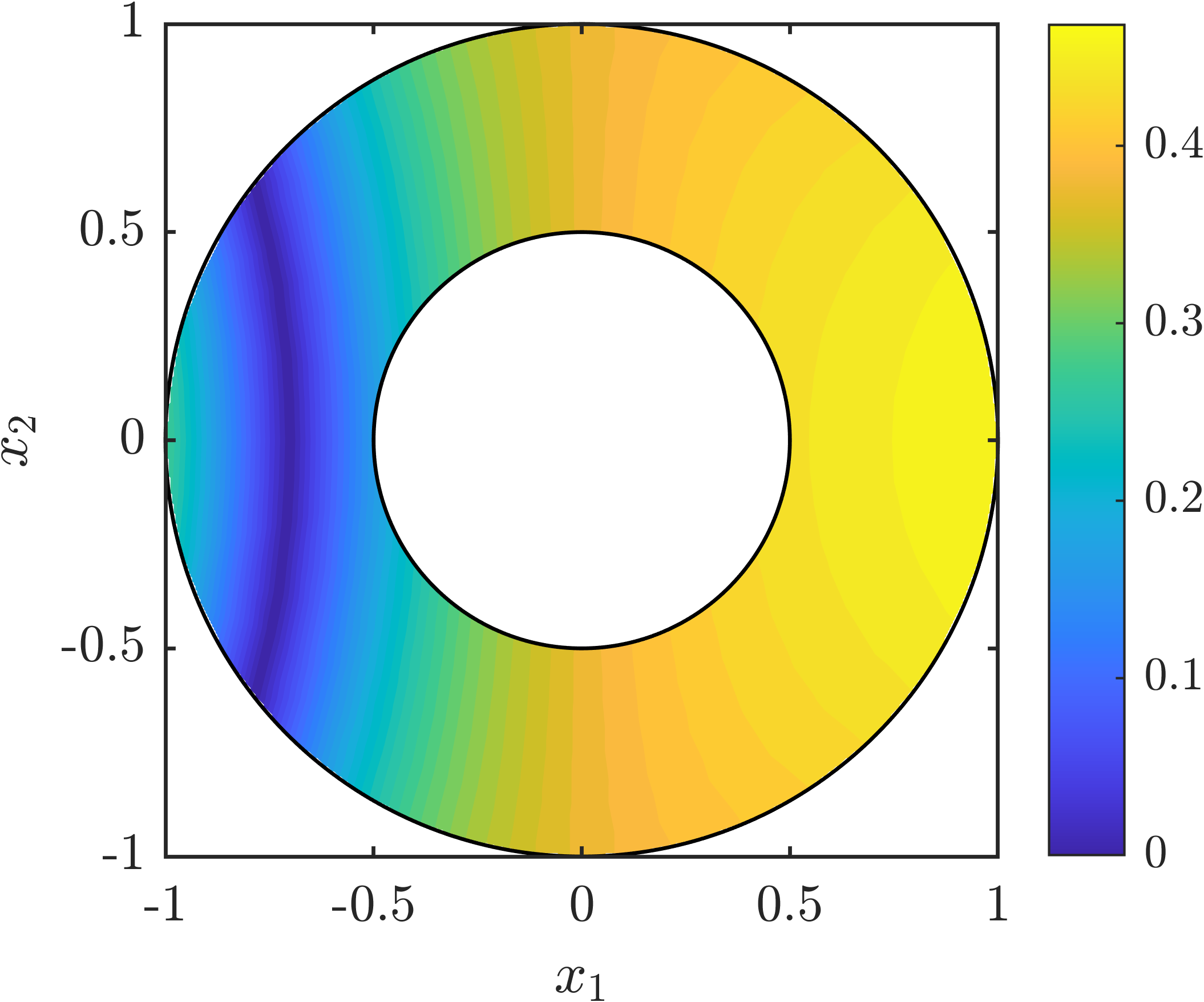}%
}

\caption{Spatial distributions obtained on the finest meshes. The top
and bottom rows correspond to Examples~1 and~2, respectively. The left
column shows the pointwise absolute error
$|u-u_h^{(k_h)}|$, while the right column shows the relative
temperature-induced variation $\delta_{\K}$ of the conductivity tensor,
defined in~\eqref{eq:cond_variation}. For Example~2, both quantities are
displayed on the central cross-section $x_3=0$ of the three-dimensional
domain.}
\label{fig:spatial_errors}
\end{figure}

Figure~\ref{fig:spatial_errors} displays the pointwise absolute error
$|u-u_h^{(k_h)}|$ together with the corresponding distribution of
$\delta_{\K}$, computed on the finest meshes reported in
Tables~\ref{tab:ex1} and~\ref{tab:ex2}, namely
$h=0.03$ for Example~1 and $h=0.08$ for Example~2. The colour scales show that the
computed temperature field changes the conductivity tensor by up to
approximately $25\%$ and $45\%$, respectively, confirming that both
examples exhibit a substantial nonlinear response, which is particularly
pronounced in Example~2. Moreover, the oscillatory spatial dependence of
the conductivity tensor is reflected in the localised oscillations and
peaks of the error fields, with the stronger and more spatially variable
nonlinearity in Example~2 being accompanied by a more pronounced error
pattern.

\section{Conclusion}
This work has studied conforming finite element approximations of a quasilinear elliptic model for anisotropic heat conduction with temperature-dependent conductivity. The conductivity tensor may depend on both position and temperature and may be matrix-valued, anisotropic and nonsymmetric, leading in general to a nonmonotone and nonpotential problem. In combination with inhomogeneous mixed Dirichlet--Neumann boundary conditions, this places the analysis beyond the standard homogeneous Dirichlet and monotonicity-based frameworks for nonlinear elliptic equations.

The error analysis was carried out without assuming global uniqueness of the nonlinear discrete problem. Instead, the estimates were obtained for arbitrary conforming piecewise linear Galerkin solutions, avoiding branch-selection arguments and additional discrete uniqueness conditions. By combining mixed-boundary elliptic regularity, nonlinear error estimates and an Aubin--Nitsche duality argument, we recovered the optimal first-order convergence in the $H^1$-norm and second-order convergence in the $L^2$-norm in this anisotropic, nonsymmetric and quasilinear setting.

A second main aspect of the paper was the connection between the finite element estimates and the actual nonlinear solver. Using a Newton--Kantorovich argument, we derived a computable local convergence criterion expressed through discrete quantities, namely the inverse Jacobian, an explicit Lipschitz bound for the discrete derivative and the Newton radius. This provides a practical certificate for the convergence of Newton's method to a discrete Galerkin solution. Moreover, a mesh-dependent stopping rule was shown to keep the algebraic error below the finite element error, so that the optimal convergence rates are preserved.

The numerical experiments in two and three dimensions support both parts of the theory. They confirm the predicted $H^1$- and $L^2$-convergence rates, illustrate the practical construction of a certified Newton initial iterate, and show that the Newton–Kantorovich condition can be verified in concrete computations.

\section*{Acknowledgements}

The author gratefully acknowledges Dr. Drago\c{s} Manea and Professor Liviu Marin for their valuable insights, suggestions, and stimulating discussions.


\appendix
\section{Appendix}

\begin{lem}\label{lem:grad-nodal-bound}

For every $v_h\in V_h$ with the corresponding nodal values $V = \{V_i\}_{i=1}^{N_h}$,
\begin{align}
    \|v_h\|_{L^2(\Omega)} &\le \left( \dfrac{(d+1) N_*}{d!} \right)^{1/2} h^{\frac{d}{2}} \| V \|_2\,, \label{eq:FEM-nodal-bound}\\[4pt]
    \|\nabla v_h\|_{L^2(\Omega)} &\le \left( \dfrac{(d+1)\sigma^2 N_*}{d!} \right)^{1/2} h^{\frac{d}{2} - 1} \| V \|_2\,, \label{eq:FEM-grad-nodal-bound}
\end{align}
where $\sigma$ is the shape-regularity parameter and $N_\ast$ is the maximum number of elements sharing a node.
\end{lem}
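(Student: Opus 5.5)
The plan is to prove both bounds elementwise and then sum over $\mathcal{T}_h$, using that each nodal value enters only the elements in its patch. Fix $v_h\in V_h$ and write $v_h=\sum_i V_i\varphi_i$. On a single element $T$, $v_h|_T=\sum_{a} V_a\lambda_a$ is a sum over the $d+1$ vertices of $T$, where $\lambda_a$ are the barycentric coordinates. Vertices lying on $\Gamma_D$ carry value $0$ and can be dropped, which only helps.

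\textbf{$L^2$ bound.} Since $0\le\lambda_a\le 1$ and $\sum_a\lambda_a=1$, we have $|v_h(x)|\le \max_{a\in T}|V_a|$ on $T$. Hence
\[
\|v_h\|_{L^2(T)}^2\le |T|\,\max_{a\in T}|V_a|^2\le |T|\sum_{a\in T}|V_a|^2 .
\]
A $d$-simplex of diameter $h_T$ has volume bounded by a constant times $h_T^d$. To reach the stated constant $\tfrac{d+1}{d!}$, I will either use $|T|\le h_T^d/d!$ directly, or use the cruder bound $|v_h|^2\le (d+1)\sum_a|V_a|^2\lambda_a^2$ from Cauchy--Schwarz combined with $|T|\le h^d/d!$. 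The latter gives exactly a factor $(d+1)/d!$ in front of $h^d\sum_{a\in T}|V_a|^2$, and I will pick whichever matches the stated constant.

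Summing over $T\in\mathcal{T}_h$, each node $i$ appears in at most $N_*$ elements. Therefore
\[
\sum_T\sum_{a\in T}|V_a|^2\le N_*\|V\|_2^2,
\]
and taking square roots gives \eqref{eq:FEM-nodal-bound}.

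\textbf{Gradient bound.} On $T$, $\nabla v_h=\sum_a V_a\nabla\lambda_a$. The standard estimate $|\nabla\lambda_a|\le 1/\rho_T$ holds because $\lambda_a$ drops from $1$ to $0$ across the facet opposite $a$, and the height of that facet is at least the inscribed diameter $2\rho_T\ge\rho_T$. Shape regularity \eqref{eq:shape-regularity} then gives $1/\rho_T\le\sigma/h_T$. Cauchy--Schwarz yields
\[
|\nabla v_h|^2\le (d+1)\sum_{a\in T}|V_a|^2\sigma^2h_T^{-2},
\]
so
\[
\|\nabla v_h\|_{L^2(T)}^2\le (d+1)\sigma^2 h_T^{-2}|T|\sum_{a\in T}|V_a|^2\le \frac{(d+1)\sigma^2}{d!}h_T^{d-2}\sum_{a\in T}|V_a|^2 .
\]

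\textbf{Main obstacle.} The difficulty is converting $h_T^{d-2}$ into $h^{d-2}$. For $d=2$ this is trivial, and for $d=3$ we have $h_T\le h$, so $h_T^{d-2}\le h^{d-2}$ holds since $d\ge 2$. A second point to check carefully is the volume bound $|T|\le h_T^d/d!$. The justification is that the edge vectors from one vertex have lengths at most $h_T$, so $|T|=|\det|/d!\le h_T^d/d!$ by Hadamard's inequality.

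With these in place, summing over elements with the overlap count $N_*$ as before gives \eqref{eq:FEM-grad-nodal-bound}.
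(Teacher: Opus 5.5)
Your proposal is correct and follows the paper's proof essentially step by step. Both arguments use an elementwise Cauchy--Schwarz bound on the barycentric expansion, the volume bound $|T|\le h_T^d/d!$, the gradient bound $|\nabla\lambda_{T,j}|\le 1/\rho_T\le\sigma/h_T$, the inequality $h_T^{d-2}\le h^{d-2}$ for $d\ge 2$, and the overlap count $N_*$ when summing over elements. Your alternative $L^2$ route via $|v_h|\le\max_a|V_a|$ is also valid and in fact gives the sharper constant $1/d!$, which already implies the stated bound, so you do not need to choose between the two.
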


\begin{proof}

Let $T\in\mathcal T_h$ with the vertices  $N_{T,1},\dots,N_{T,d+1}$. On $T$ we have
\[
v_h|_T
=
\sum_{j=1}^{d+1} V_{T,j}\lambda_{T,j}\,, \qquad 
\nabla v_h|_T
=
\sum_{j=1}^{d+1} V_{T,j}\nabla\lambda_{T,j}\,,
\]
where $V_{T,j}$ are the nodal values and $\lambda_{T,j}$ the barycentric basis functions corresponding to $\{ N_{T, j}\}_{j=1}^{d + 1}$.
Using the triangle inequality and Cauchy-Schwarz,
\begin{align}
    |v_h|
    & \le
    \sum_{j=1}^{d+1}|V_{T,j}|\,|\lambda_{T,j}|
    \le
    \sqrt{d + 1} \left(\sum_{j=1}^{d+1} V_{T,j}^2 \, \lambda_{T,j} ^2\right)^{1 / 2}\label{eq:appendix-CS1} \\[4pt]
    |\nabla v_h|
    & \le
    \sum_{j=1}^{d+1}|V_{T,j}|\,|\nabla\lambda_{T,j}|
    \le
    \sqrt{d + 1} \left(\sum_{j=1}^{d+1} V_{T,j}^2 \, |\nabla \lambda_{T,j}|^2\right)^{1 / 2}\label{eq:appendix-CS2} 
\end{align}
Integrating \eqref{eq:appendix-CS1} over $T$, we have
\[
    \| v_h \|^2_{L^2(T)} \leq (d + 1) \sum_{j=1}^{d+1} V_{T,j}^2 \, \|\lambda_{T,j}\|^2_{L^2(T)}
\]
Since $0 \leq \lambda_{T, j} \leq 1$ on $T$, we have $\|\lambda_{T,j}\|^2_{L^2(T)} \leq |T|$. Employing the following bound for the volume of a $d$--simplex
\[
|T|
=
\frac{1}{d!}
\left|\det(N_{T,2} - N_{T, 1},\dots,N_{T, d + 1} - N_{T, 1})\right|
\le
\dfrac{h_T^d}{d!}
\le
\dfrac{h^d}{d!}\,,
\]
we arrive at
\[
    \| v_h \|^2_{L^2(T)} \leq \dfrac{(d + 1)}{d!} h^d \sum_{j=1}^{d+1} V_{T,j}^2 
\]
Summing over all elements, 
\[
\|v_h\|_{L^2(\Omega)}^2
\le
\dfrac{(d+1)}{d!} h^d
\sum_{T\in\mathcal T_h}\sum_{j=1}^{d+1}V_{T,j}^2 .
\]
Since at most $N_*$ elements meet at any node, we arrive at
\[
\|v_h\|_{L^2(\Omega)}^2 \le \dfrac{(d+1)N_*}{d!}  h^{d} \| V \|^2_2\,.
\]

To estimate now $\|\nabla v_h\|_{L^2(\Omega)}^2$, we return to \eqref{eq:appendix-CS2}, and note that the gradients of the barycentric basis functions $\lambda_{T,j}$ are constants on $T$, equal to the inverse of the altitude from vertex $N_{T, j}$ to the opposing face. Since every altitude is bounded below by the inradius, from the shape-regularity of the triangulations \eqref{eq:shape-regularity} we obtain that
\[
|\nabla\lambda_{T,j}|
\le
\frac{1}{\rho_T}
\le
\dfrac{\sigma}{h_T}
\qquad j=1,\dots,d+1.
\]
Hence,
\[
|\nabla v_h|
\le
\dfrac{\sigma\sqrt{d+1}}{h_T} \left(\sum_{j=1}^{d+1}V_{T,j}^2\right)^{1/2}.
\]
Since $\nabla v_h$ is constant on $T$, and employing once again the bound of the simplex volume $|T| \le h_T^d / d!$, 
\[
\|\nabla v_h\|_{L^2(T)}^2
=
|T|\,|\nabla v_h|^2 
\le
\dfrac{(d+1)\sigma^2}{d!}
h_T^{d-2}\sum_{j=1}^{d+1}V_{T,j}^2 .
\]
Summing over all elements and recalling that $h_T \le h$ for any $T \in \mathcal{T}_h$,
\[
\|\nabla v_h\|_{L^2(\Omega)}^2
\le
\dfrac{(d+1)\sigma^2}{d!} h^{d-2}
\sum_{T\in\mathcal T_h} \sum_{j=1}^{d+1}V_{T,j}^2
\le \dfrac{(d+1)\sigma^2 N_*}{d!}  h^{d - 2} \| V \|^2_2\,.
\]
\end{proof}

\clearpage
\printbibliography
\end{document}